\documentclass[a4paper,leqno,12pt]{amsart}
\usepackage{amsmath,amsthm}
\usepackage{amssymb}
\usepackage{xspace}
\usepackage{bm}
\usepackage{amstext}
\usepackage{amsfonts}
\usepackage{graphicx}
\usepackage[mathscr]{euscript}
\usepackage{amscd}
\usepackage{latexsym}
\usepackage{enumerate}
\usepackage{mathrsfs}
\usepackage[cmtip,all]{xy}
\usepackage{xcolor}
\usepackage{standalone}
\usepackage{tikz}
\usetikzlibrary{arrows.meta}
\usepackage{tikz-cd}
\usetikzlibrary{intersections}
\usetikzlibrary{calc}
\usetikzlibrary{decorations.markings}
\usetikzlibrary{decorations.pathreplacing}
\usetikzlibrary{arrows}
\usetikzlibrary{shapes,arrows,shapes.multipart}
\usepackage[colorlinks=true, citecolor=blue, linkcolor=blue]{hyperref}

\newtheorem{theorem}{Theorem}[section]
\newtheorem{lemma}[theorem]{Lemma}
\newtheorem{proposition}[theorem]{Proposition}
\newtheorem{corollary}[theorem]{Corollary}
\theoremstyle{definition}
\newtheorem{definition}[theorem]{Definition}
\newtheorem{example}[theorem]{Example}
\newtheorem{remark}[theorem]{Remark}
\newtheorem*{thma}{Theorem A}
\newtheorem*{thmb}{Theorem B}
\newtheorem*{thmc}{Theorem C}
\newtheorem*{thmd}{Theorem D}
\newtheorem*{thme}{Theorem E}

\newcommand{\F}{\mathbb{F}}
\newcommand{\Sub}{\operatorname{Sub}}
\newcommand{\AGL}{\operatorname{AGL}}
\newcommand{\SD}{\operatorname{SD}}

\newcommand{\cI}{\mathcal{I}}
\newcommand{\cT}{\operatorname{Tr}}
\newcommand{\Cat}{\operatorname{Cat}}

\tikzset{
  hasseedge/.style = {draw=gray!55, thin},
  solidarrow/.style = {->, thick},
  dashedarrow/.style = {->, dashed, thick},
  crossarrow/.style  = {->, dashed, red!70!black, thick},
  mynode/.style = {circle, draw, inner sep=2pt, minimum size=16pt, font=\small}
}

\begin{document}

\title{Minimal generating sets of transfer systems for more non-Abelian Groups}

\author{Bheemarasetty Chakravarthy}
\address{Department of Mathematics, Indian Institute of Technology Roorkee,
  Roorkee 247667, India}
\email{b\_chakravarthy@ma.iitr.ac.in}

\author{Surojit Ghosh}
\address{Department of Mathematics, Indian Institute of Technology Roorkee,
  Roorkee 247667, India}
\email{surojit.ghosh@ma.iitr.ac.in}

\subjclass[2020]{Primary 06B15; Secondary 55P91, 20D30}
\keywords{Transfer system, width, complexity, meet irreducible subgroup,
  semidihedral group, Frobenius group, dihedral group, partial rainbow}

\begin{abstract}
For a finite group $G$, $N_\infty$ operads encode collections of norm maps,
and by work of Blumberg--Hill and Rubin their homotopy category is equivalent
to the poset of $G$--transfer systems on the subgroup lattice of $G$.
In \cite{ABB+25} the authors defined the \emph{width} $w(G)$ as the minimal
size of a generating set for the complete $G$--transfer system and identified
it with the number of conjugacy classes of proper meet irreducible subgroups
of $G$, and the \emph{complexity} $c(G)$ as the maximum, over all transfer
systems $T$, of the size of a minimal generating set for $T$.

We compute $w(G)$ for the semidihedral groups $\SD_{2^n}$ ($n\ge 4$) and
the affine Frobenius groups $\AGL(1,p^n)\cong \mathbb{F}_{p^n}\rtimes
\mathbb{F}_{p^n}^\times$, extending existing calculations and highlighting how
subgroup lattice structure governs equivariant multiplicative complexity.
We also compute $c(D_{p^n})$ for dihedral groups of order $2p^n$ with $p$
an odd prime, establishing $c(D_{p^n})=\lfloor 3n/2\rfloor+1$, and derive
the lower bound $c(\SD_{2^n})\ge\lfloor 5(n-1)/2\rfloor$.
\end{abstract}

\maketitle

\section{Introduction}

Equivariant homotopy theory has undergone a remarkable conceptual expansion
over the past decade, driven in large part by a deeper understanding of
multiplicative structures in genuine $G$--spectra.
In contrast with the nonequivariant setting, commutativity in equivariant
stable homotopy theory is not a single notion but a hierarchy of structures
governed by the existence of norm maps which encode multiplicative induction
along subgroup inclusions.
These norms play a central role in modern developments, including the solution
of the Kervaire invariant problem and subsequent advances in equivariant
algebraic topology.

A systematic framework for organizing such multiplicative structures was
introduced by Blumberg and Hill in \cite{BH15} through the theory of
\(N_\infty\) operads.
An \(N_\infty\) operad is a genuine equivariant operad whose algebras admit
a specified collection of norm maps, interpolating between naive
$E_\infty$--ring $G$--spectra (with no nontrivial norms) and fully genuine
commutative ring $G$--spectra (with all norms present).
A central insight of \cite{BH15} is that the homotopy theory of \(N_\infty\)
operads is governed by purely combinatorial data, called \emph{indexing
systems}, consisting of certain collections of finite $H$--sets closed under
natural operations.

Rubin subsequently recast this structure in a strikingly economical form.
In \cite{Rub21a,Rub21b}, he introduced the notion of a \emph{$G$--transfer
system} -- a partial order on the subgroup lattice of a finite group $G$
satisfying axioms reflecting the formal properties of norms.
He proved that the homotopy category of \(N_\infty\) operads is equivalent
to the poset of $G$--transfer systems.
Thus the intricate homotopy--theoretic problem of classifying multiplicative
equivariant structures becomes a problem in lattice theory: understanding
certain compatible refinements of the subgroup lattice.
Bonventre and Pereira \cite{BP21} developed a parallel theory of genuine
equivariant operads, and Balchin, Barnes, and Roitzheim \cite{BBR21}
discovered surprising links with Catalan numbers and associahedra: the number
of transfer systems on a chain of length \(n\) equals the \((n+1)\)-st
Catalan number \(\Cat(n+1)\).

In a recent article, Adamyk, Balchin, Barrero, Scheirer, Wisdom, and Zapata
Castro \cite{ABB+25} introduced two numerical invariants attached to a group
\(G\) in this context: the \emph{width} and the \emph{complexity}.
The width \(w(G)\) is defined as the size of a minimal generating set for the
complete \(G\)-transfer system (the one containing every possible edge
$K\to H$ with $K\le H$).
A key result of \cite{ABB+25} identifies \(w(G)\) with the number of
conjugacy classes of proper \emph{meet irreducible subgroups} of \(G\).
The complexity $c(G)$ is the companion invariant defined as the maximum
cardinality of a minimal generating set over all transfer systems in
$\cT(G)$; it resists a direct lattice-theoretic description and requires
careful combinatorial work.

Widths have now been computed for several important families of groups.
In particular, Klanderman, Lewis, Monson, Shibata, and Van Niel \cite{KLM+25}
determined $w(G)$ for dihedral, quaternion, and dicyclic groups, revealing
systematic patterns tied to the structure of their subgroup lattices.
These calculations demonstrate that width reflects subtle interactions between
normal subgroups, maximal subgroups, and chains in $\Sub(G)$.
The first exact complexity computation appears in \cite{ABB+25}, where it is
shown that
\[
  c(C_{p^n q}) = \begin{cases} 3k+1 & n=2k,\\ 3k+2 & n=2k+1,\end{cases}
\]
for distinct primes $p$ and $q$.

The purpose of the present paper is to extend this program to three further
families of groups whose subgroup structures exhibit distinct features: the
semidihedral $2$--groups, the affine linear groups over finite fields and Modular maximal-cyclic groups.
We additionally compute the complexity for the dihedral groups $D_{p^n}$,
whose subgroup lattice presents a genuinely richer structure than the cyclic
case, requiring a new bridge-exclusion argument.

\begin{thma}\label{thm:sd}
For the Semidihedral group \(\SD_{2^n}\) with \(n\ge 4\), one has
\(w(\SD_{2^n}) = 2n-2\).
\end{thma}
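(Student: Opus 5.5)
By the result of \cite{ABB+25} recalled in the introduction, $w(\SD_{2^n})$ is the number of conjugacy classes of proper meet irreducible subgroups of $\SD_{2^n}$. Here a proper subgroup $H<G$ is meet irreducible exactly when it has a unique cover in $\Sub(G)$, i.e.\ a unique minimal overgroup. So the plan is to list $\Sub(\SD_{2^n})$ explicitly, find the unique-cover subgroups, and count them up to conjugacy.

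I would use the presentation $\SD_{2^n}=\langle a,b\mid a^{2^{n-1}}=b^2=1,\ bab=a^{2^{n-2}-1}\rangle$. Each element outside $\langle a\rangle$ has the form $a^jb$. For $j$ even it is an involution. For $j$ odd it has order $4$, with square $a^{2^{n-2}}$. Every subgroup is then of one of three kinds:
\begin{enumerate}
\item[(i)] cyclic, $C_i=\langle a^{2^i}\rangle$ for $0\le i\le n-1$;
\item[(ii)] dihedral type, $D_{i,j}=\langle a^{2^i},a^jb\rangle$ with $j$ even and $1\le i\le n-1$, where $j$ matters only modulo $2^i$ (for $i=n-1$ this is just $\langle a^jb\rangle\cong C_2$);
\item[(iii)] quaternion type, $Q_{i,j}=\langle a^{2^i},a^jb\rangle$ with $j$ odd and $1\le i\le n-2$ (for $i=n-2$ this is the cyclic group $\langle a^jb\rangle\cong C_4$).
\end{enumerate}
This classification is standard and follows from $H\cap\langle a\rangle$ being some $C_i$, together with the relation $(a^jb)(a^kb)=a^{j-k+(\text{even})}$-type computations.

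Next I would determine the covers of each kind.
\begin{enumerate}
\item[(i)] For $i\ge 1$, $C_i$ lies in $C_{i-1}$, in $D_{i,0}$ and in $Q_{i,1}$. So $C_i$ has at least two covers and is not meet irreducible. The same holds for the trivial group. Only $C_0=\langle a\rangle$, which is maximal, survives.
\item[(ii)] A dihedral-type subgroup contains only involutions $a^kb$ (with $k$ even) and rotations. It therefore lies in no quaternion-type subgroup and in no cyclic subgroup of the listed form. Its minimal overgroups must then be the dihedral-type groups $D_{i-1,j'}$ containing $a^jb$, and $j'\equiv j \pmod{2^{i-1}}$ pins these down uniquely. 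Hence every $D_{i,j}$ is meet irreducible, including the order-$2$ groups $D_{n-1,j}$ and the maximal $D_{1,0}$.
\item[(iii)] The same argument shows that each $Q_{i,j}$ has the unique cover $Q_{i-1,j}$, or $G$ when $i=1$. For the cyclic $\langle a^jb\rangle\cong C_4$, I also have to check that it lies in no cyclic subgroup of order $8$. This holds because squares of elements lie in $\langle a\rangle$.
\end{enumerate}

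The last step is counting conjugacy classes. Conjugation by $a$ sends $a^jb$ to $a^{j+2-2^{n-2}}b$. Since $n\ge 4$, the shift $2-2^{n-2}=2(1-2^{n-3})$ is twice an odd number. So for each $i$ it acts transitively on the even residues $j \bmod 2^i$, and likewise on the odd ones. Thus there is exactly one conjugacy class of $D_{i,\ast}$ for each $1\le i\le n-1$, giving $n-1$ classes. There is exactly one class of $Q_{i,\ast}$ for each $1\le i\le n-2$, giving $n-2$ classes. Adding the class of $\langle a\rangle$ gives $1+(n-1)+(n-2)=2n-2$.

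I expect the main obstacle to be the cover analysis for the small subgroups. The cases needing care are the order-$2$ and order-$4$ subgroups near the bottom, where the dihedral and quaternion families and the centre $\langle a^{2^{n-2}}\rangle$ interact. There I must make sure that no unexpected overgroup exists, for example that $\langle a^jb\rangle\cong C_4$ is not contained in a dihedral-type subgroup. The hypothesis $n\ge4$ enters both here and in the conjugacy count.
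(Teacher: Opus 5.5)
Your proposal is correct and follows essentially the same route as the paper. Both arguments split $\Sub(\SD_{2^n})$ into cyclic, dihedral-type and quaternion-type strands, discard every cyclic subgroup except $\langle a\rangle$, and use the fact that the overgroups of a dihedral- or quaternion-type subgroup within its strand form a chain, so each has a unique cover; counting one conjugacy class per order then gives $1+(n-1)+(n-2)=2n-2$. The only differences are organisational: the paper arranges this around the three maximal subgroups $M_c,M_d,M_q$, whereas you classify globally, and your computation that conjugation by $a$ shifts $j$ by $2(1-2^{n-3})$, twice an odd number, makes explicit the transitivity on each level that the paper states with less justification.
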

See Theorem~\ref{tsemid}.

\begin{thmb}\label{thm:agl}
For the Frobenius group \(G = \AGL(1,p^n) \cong \mathbb{F}_{p^n} \rtimes
\mathbb{F}_{p^n}^\times\), where \(p\) is prime and \(n\ge 1\),
\[
  w(G) = \Omega(p^n-1) + \tau(n),
\]
with \(\Omega(m)\) the total number of prime factors of \(m\) (counted with
multiplicity) and \(\tau(m)\) the number of positive divisors of \(m\).
\end{thmb}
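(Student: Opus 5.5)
By the identification of \cite{ABB+25} recalled in the introduction, $w(G)$ is the number of conjugacy classes of proper meet irreducible subgroups of $G$. A subgroup is meet irreducible exactly when it has a \emph{unique cover} in $\Sub(G)$. So the plan is to classify all subgroups of $G=V\rtimes \F_q^\times$ (with $V=\F_q$, $q=p^n$) up to conjugacy, determine their covers, and count those with exactly one.

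\emph{Classification of subgroups.} Every $H\le G$ determines the translation part $W=H\cap V$, an $\F_p$-subspace, and the image of $H$ in $\F_q^\times$, a cyclic group $C_d$ with $d\mid q-1$. Since $|V|$ and $q-1$ are coprime, Schur--Zassenhaus shows that $H$ splits as $W\rtimes C$ with $C$ cyclic of order $d$. Such a $C$ fixes a point of $V$, so after conjugating by a translation $C=C_d\le \F_q^\times$ and $H=W\rtimes C_d$. For this to be a subgroup, $W$ must be stable under multiplication by $\mu_d$. Equivalently, $W$ is a vector space over the subfield $K_d=\F_p(\mu_d)=\F_{p^{m(d)}}$, where $m(d)$ is the order of $p$ modulo $d$. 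Conjugation by $\F_q^\times$ replaces $W$ by $aW$. Hence conjugacy classes correspond to pairs $(d,[W])$, with $W$ a $K_d$-subspace of $\F_q$ taken up to scaling.

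\emph{Analysis of covers.} A cover of $W\rtimes C_d$ arises in one of two ways. Either $d$ is enlarged to $d\ell$ for a prime $\ell$ with $d\ell\mid q-1$ and $W$ still $K_{d\ell}$-stable. Or $W$ is enlarged to a minimal strictly larger $K_d$-subspace $W'$, of which there are as many as the lines of $\F_q/W$ over $K_d$. I would treat three cases.

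\begin{enumerate}
\item[(i)] $W=V$. The covers of $V\rtimes C_d$ are exactly $V\rtimes C_{d\ell}$, so this subgroup is meet irreducible iff $(q-1)/d$ is a nontrivial prime power. For each prime power $\ell^a$ exactly dividing $q-1$ this contributes $a$ classes, for a total of $\Omega(q-1)$.
\item[(ii)] $W=0$. The subgroup $C_d$ has as covers all $K_dv\rtimes C_d$ together with the $C_{d\ell}$, so it has a unique cover only when $d=q-1$. This gives the single class of $\F_q^\times$.
\item[(iii)] $0\ne W\ne V$. I expect that meet irreducibility forces $W=\F_{p^m}$ up to scaling, for a proper divisor $m\mid n$, with $d$ maximal for $W$, i.e.\ $H_m\cong \AGL(1,p^m)$ or a closely related subgroup. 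These should give one class for each proper divisor $m$ of $n$. Together with (ii) this yields $\tau(n)$ classes.
\end{enumerate}

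Summing gives $\Omega(p^n-1)+\tau(n)$. As a sanity check I would verify the count against small cases such as $\AGL(1,4)\cong A_4$ and $\AGL(1,9)$.

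The main obstacle is case (iii). There I must show that every candidate other than the designated $H_m$ has at least two covers, using two distinct $K_d$-lines outside $W$ or an independent enlargement of $d$. I must also show that each $H_m$ really has exactly one cover. The delicate point is that enlarging $W$ by an extra $K_d$-line typically produces many overgroups. So I would first check carefully which extensions of $\F_{p^m}\rtimes C_d$ are actually minimal: the genuinely minimal overgroup should be forced to be of the form $V\rtimes C_{d'}$ or an intermediate-field analogue. If the naive choice $H_m=\AGL(1,p^m)$ fails this test, I would instead look for the correct representative among the subgroups with $W=\F_{p^m}$ and $d$ varying. I would guide that search by requiring exactly one class per proper divisor $m$.
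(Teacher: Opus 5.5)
Your subgroup classification via Schur--Zassenhaus and your treatment of the cases $W=V$ and $W=0$ are sound. They match the paper's family $V\rtimes D$ (giving $\Omega(p^n-1)$ classes) and the complement $\F_{p^n}^\times$. The gap is case (iii), which carries the whole $\tau(n)-1$ contribution. You leave it unproved, and the representatives you predict are the wrong ones. If $W=c\,\F_{p^m}$, its stabiliser in $\F_{p^n}^\times$ is $\F_{p^m}^\times$, so $K_d\subseteq\F_{p^m}$ and $\dim_{K_d}(V/W)\ge n/m-1$. By your own count, $W\rtimes C_d$ then has at least as many covers $W'\rtimes C_d$ as there are $K_d$-lines in $V/W$, and that is more than one unless $m=n/2$. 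For example, in $\AGL(1,8)$ the subgroup $\F_2\le V$ has three covers (the three planes containing it). In $\AGL(1,27)$ the subgroup $\F_3\rtimes\F_3^\times\cong S_3$ has four covers. So for $m\ne n/2$ no subgroup with translation part $\F_{p^m}$ is meet irreducible, and the fallback search you describe would come up empty. Your proposed sanity checks $\AGL(1,4)$ and $\AGL(1,9)$ cannot detect this: there $n=2$, and the only proper divisor is $m=1=n/2$.

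The correct family, which is the one the paper uses, is $A\rtimes L^\times$ with $L=\F_{p^r}$ for a proper divisor $r$ of $n$ and $A$ an $L$-\emph{hyperplane} of $V$. Thus $|A|=p^{n-r}$, not $p^r$. Your own cover analysis already forces this. A single $W$-enlargement means $\dim_{K_d}(V/W)=1$. If $C_d\ne K_d^\times$, then some $W\rtimes C_{d\ell}$ with $C_{d\ell}\le K_d^\times$ is a second cover (equivalently, $W\rtimes C_d=(W\rtimes K_d^\times)\cap(V\rtimes C_d)$), so $C_d=K_d^\times$. To close the argument you still need two facts. First, an $L$-hyperplane is not a subspace over any strictly larger subfield $L'$: if $[L':\F_p]=r'$, then $r'\mid n$ and $r'\mid n-r$, so $r'\mid r$. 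Hence any overgroup of $A\rtimes L^\times$ has an $L$-stable translation part, which is $A$ or $V$. If it is $A$, the image lies in the stabiliser $L^\times$, so $V\rtimes L^\times$ is the unique cover. Second, $\F_{p^n}^\times$ permutes the $L$-hyperplanes transitively. By nondegeneracy of the trace form they are exactly the kernels of $x\mapsto\operatorname{Tr}_{\F_{p^n}/L}(\beta x)$ with $\beta\ne 0$, and scaling by $c$ replaces $\beta$ by $\beta c^{-1}$. So each proper divisor $r$ contributes exactly one conjugacy class.
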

See Theorem~\ref{tfrob}.

\begin{thmc}\label{thm:mod}
For the Modular maximal-cyclic group \(M_n(2)\) with \(n\ge 4\), one has
\(w(M_n(2)) = 2(n-1)\).
\end{thmc}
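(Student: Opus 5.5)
We will use the result of \cite{ABB+25} recalled in the introduction: $w(G)$ is the number of conjugacy classes of proper meet irreducible subgroups of $G$. Here $H<G$ is meet irreducible exactly when $H$ has a unique cover in $\Sub(G)$. So the proof reduces to describing $\Sub(M_n(2))$ with its cover relations and its conjugation action.

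We fix the presentation
\[
M_n(2)=\langle a,b\mid a^{2^{n-1}}=b^2=1,\ bab=a^{1+2^{n-2}}\rangle .
\]
First we list all subgroups. They are the cyclic groups $\langle a^{2^i}\rangle$ and $\langle a^{2^i}b\rangle$, together with the noncyclic groups $\langle a^{2^i},b\rangle$. The key structural input is the classical fact (Iwasawa) that $M_n(2)$ is a modular $2$-group, lattice-isomorphic to the abelian group $A=C_{2^{n-1}}\times C_2$. Rather than quote this, we will check it directly from the list by matching $\langle a^{2^i}\rangle$, $\langle a^{2^i}b\rangle$, $\langle a^{2^i},b\rangle$ with the corresponding subgroups of $A$. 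Since meet irreducibility depends only on the lattice, we may count meet irreducible elements in $\Sub(A)$.

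For abelian $A$, the covers of $H$ correspond to the subgroups of order $2$ in $A/H$. A finite abelian $2$-group has a unique subgroup of order $2$ iff it is cyclic. Hence $H<A$ is meet irreducible iff $A/H$ is nontrivial cyclic. We then count kernels of surjections $A\to C_{2^k}$ for $1\le k\le n-1$:
\begin{itemize}
\item for $k=1$ there are $3$ such kernels, the three maximal subgroups;
\item for each $2\le k\le n-1$ there are exactly $2$, since $a$ must map to a generator $u$ and $b$ to $0$ or $2^{k-1}u$.
\end{itemize}
This gives $3+2(n-2)=2n-1$ meet irreducible proper subgroups of $M_n(2)$. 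As a sanity check in $M_n(2)$ itself, $\langle b\rangle$ has the unique cover $\langle a^{2^{n-2}},b\rangle$: the only other order-$4$ candidates are cyclic, generated by elements $a^{2^{n-3}}b$ or $a^{2^{n-3}}$ whose squares lie in $\langle a^{2^{n-2}}\rangle$, not equal to $b$.

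Finally we handle conjugacy. Every subgroup containing $a^2$ or contained in $\langle a\rangle$ is normal, because $\langle a\rangle$ is normal and the derived subgroup is $\langle a^{2^{n-2}}\rangle$. The subgroups of order $2$ outside $\langle a\rangle$ are $\langle b\rangle$ and $\langle a^{2^{n-2}}b\rangle$, and they are conjugate via $a$, because $a^{-1}ba=a^{-1}a^{1+2^{n-2}}b$. These two are precisely the kernels for $k=n-1$ with $b\notin$ kernel ruled out, that is, the two order-$2$ meet irreducible subgroups. All other subgroups of the form $\langle a^{2^i}b\rangle$ with $i<n-2$ contain $a^{2^{i+1}}$ and are therefore normal. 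Thus exactly one pair of meet irreducible subgroups is identified, giving $w(M_n(2))=2n-2=2(n-1)$.

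The main obstacle is the bookkeeping in the first step: verifying carefully that the listed subgroups are all of them and that the lattice matches $A$. We will do this by using the normal form $a^ib^j$ and the squaring formula $(a^ib)^2=a^{i(2+2^{n-2})}$ to determine the cyclic subgroup generated by each element.
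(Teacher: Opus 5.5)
Your proof is correct, but it classifies the meet irreducible subgroups by a different route from the paper's. The conjugacy step is the same in both: every subgroup of order at least $4$ is normal, and $\langle b\rangle$, $\langle a^{2^{n-2}}b\rangle$ are conjugate under $a$.

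\emph{The two routes.} The paper stays inside $M_n(2)$. It lists the three maximal subgroups and asserts that a meet irreducible subgroup lies in only one maximal. It then discards the proper subgroups of the cyclic maximals $\langle a\rangle$ and $\langle ab\rangle$, and walks down $M_2=\langle a^2,b\rangle$ level by level. You instead pass to $\Sub(C_{2^{n-1}}\times C_2)$ and use the criterion that $H$ is meet irreducible iff the quotient is nontrivial cyclic. The classification then becomes a count of kernels of surjections onto $C_{2^k}$.

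\emph{What your route buys.} You get an explicit, checkable list: the three maximals, together with $\langle a^{2^k},b\rangle$ and $\langle a^{2^{k-1}}b\rangle$ for $2\le k\le n-1$. You also avoid the paper's unjustified step that a subgroup lying in two maximals cannot be meet irreducible. That step is false in general (the trivial subgroup of $Q_8$ is a counterexample), though it holds here.

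Comparing with your list also shows that the paper's range $1\le j\le n-3$ is off by one. Its $j=1$ entries $\langle a^2,b\rangle$ and $\langle ab\rangle$ are just $M_2$ and $M_3$ again. The order-$4$ pair $\langle a^{2^{n-2}},b\rangle$, $\langle a^{2^{n-3}}b\rangle$ is omitted. The count $2(n-3)$ nevertheless comes out right.

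\emph{The cost.} The price of your route is the lattice-isomorphism bookkeeping, and it does go through for $n\ge 4$. Since $(a^{2^j}b)^2=a^{2^{j+1}(1+2^{n-3})}$ with $1+2^{n-3}$ odd, one gets $\langle a^{2^j}b\rangle\cap\langle a\rangle=\langle a^{2^{j+1}}\rangle$, exactly as in the abelian group. The naive matching then preserves and reflects all inclusions.

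\emph{One small correction.} The normality criterion you want is containment of $G'=\langle a^{2^{n-2}}\rangle$, not of $a^2$. Every subgroup of order at least $4$ contains $a^{2^{n-2}}$. This also covers the groups $\langle a^{2^i},b\rangle$ with $i\ge 2$, which your wording leaves out.
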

See Theorem~\ref{tmod}.

\begin{thmd}\label{thm:complex}
Let $p$ be an odd prime and $n\ge 1$.  Then
\[
  c(D_{p^n}) = \left\lfloor\frac{3n}{2}\right\rfloor + 1
  = \begin{cases} 3k+1 & n=2k,\\ 3k+2 & n=2k+1.\end{cases}
\]
In particular, the sequence $c(D_{p^n})$ as $n$ ranges over positive integers
is precisely the sequence of positive integers not divisible by~$3$.
\end{thmd}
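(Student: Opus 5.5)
The strategy is to compare $D_{p^n}$ with the cyclic group $C_{p^n q}$, whose complexity \cite{ABB+25} computed to be the same number $\lfloor 3n/2\rfloor+1$. We prove a lower bound by construction and an upper bound by a structural analysis of minimal generating sets.

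First we describe $\Sub(D_{p^n})$ up to conjugacy. Since $p$ is odd, the subgroups are the cyclic subgroups $C_{p^i}$ for $0\le i\le n$, together with the dihedral subgroups $D_{p^i}=C_{p^i}\rtimes C_2$. For each $i<n$ there are $p^{n-i}$ copies of $D_{p^i}$, and they form a single conjugacy class. So up to conjugacy the lattice is the product of a chain $[n]$ with $[1]$, the same shape as $\Sub(C_{p^nq})$. The difference is that the restriction axiom must now be checked against conjugates. If $D_{p^i}\to D_{p^j}$ and $K\le D_{p^j}$, then $K\cap D_{p^i}$ depends on which conjugate is chosen. We will record this in a lemma listing exactly which edges $K\to H$ are forced by restriction and conjugation. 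We expect that every generating edge lies in one of three families: vertical edges $C_{p^i}\to D_{p^i}$, horizontal edges within the cyclic chain, and horizontal edges within the dihedral chain. We also expect that a horizontal dihedral edge $D_{p^i}\to D_{p^j}$ forces $C_{p^i}\to C_{p^j}$ and $D_{p^i}\cap{}^gD_{p^i}=C_{p^i}$-type edges down to $1\to C_{p^{j-i}}$-shifted edges. Where these closure rules agree with those of $C_{p^n q}$, we reuse the arguments of \cite{ABB+25}.

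For the lower bound we adapt the extremal transfer system of \cite{ABB+25} for $C_{p^nq}$. We split the chain into $k$ blocks of length two, plus one extra block if $n$ is odd. In each block of length two we place three edges: one vertical edge $C_{p^i}\to D_{p^i}$, one dihedral edge $D_{p^i}\to D_{p^{i+1}}$, and one cyclic edge $C_{p^{i+1}}\to C_{p^{i+2}}$. We then add one final vertical edge, and for odd $n$ the extra block supplies one more. This gives $3k+1$ or $3k+2$ edges. We would check two things:
\begin{enumerate}
\item the generated transfer system contains none of these edges in the closure of the others, by exhibiting for each edge a transfer system containing all the other edges but not that one;
\item restriction along conjugates creates no new edge that makes a generator redundant.
\end{enumerate}
The second check is where the dihedral structure can differ from the cyclic one, because intersections of distinct conjugates $D_{p^i}\cap {}^g D_{p^i}$ are cyclic. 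We need to see that these cyclic intersections do not produce extra edges.

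For the upper bound, let $T$ be a transfer system with minimal generating set $S$. We project $S$ to the chain $0<1<\dots<n$ by recording each edge's pair of levels $(i,j)$. The key claim is a bridge-exclusion lemma. If $S$ contains edges from both the cyclic and dihedral layers spanning overlapping level intervals, then one is implied by the other together with restriction. So within any two consecutive levels $S$ can carry at most three generators. One further generator can be attached at the end. Summing over blocks gives $|S|\le\lfloor 3n/2\rfloor+1$.

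The main obstacle is this bridge-exclusion lemma. Edges $C_{p^a}\to D_{p^b}$ that cross from the cyclic layer to the dihedral layer at different levels ("bridges") have no counterpart in a clean product decomposition. We would first show that any such bridge can be replaced by a vertical edge plus a horizontal edge without increasing $|S|$; this uses restriction of $C_{p^a}\to D_{p^b}$ to $C_{p^b}$, which yields $C_{p^a}\to C_{p^b}$. After that replacement, we run the cyclic counting argument of \cite{ABB+25} on the reduced generating set.
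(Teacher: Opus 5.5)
There are genuine gaps in both halves of the argument.

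\textbf{Lower bound.} Your construction is not a minimal generating set. The failure is exactly the conjugate-intersection effect you said you would have to rule out.

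First, vertical edges restrict to one another. Restricting $C_{p^j}\to D_{p^j}$ to $D_{p^i}\le D_{p^j}$ gives $C_{p^j}\cap D_{p^i}=C_{p^i}\to D_{p^i}$. So your final edge $C_{p^n}\to G$ already generates every block vertical.

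Second, even inside one block the vertical is redundant. Restrict $D_{p^i}\to D_{p^{i+1}}$ to the conjugate $D'_{p^i}=\langle r^{p^{n-i}},\,r^{p^{n-i-1}}s\rangle\le D_{p^{i+1}}$, which shares no reflection with $D_{p^i}$. This yields $C_{p^i}\to D'_{p^i}$, a conjugate of your vertical edge $C_{p^i}\to D_{p^i}$.

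So all $k$ block verticals drop out, and for $n=2k$ you get $m(T)\le 2k+1$. For instance, with $n=2$ the set $\{c_0\to d_0,\ d_0\to d_1,\ c_1\to c_2,\ c_2\to d_2\}$ generates a $T$ with $m(T)=3$, not $4$. In general a minimal generating set contains at most one vertical edge.

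The paper proceeds as follows:
\begin{enumerate}
\item It uses the rank $P(C_{p^k})=k$, $P(D_{p^k})=k+1$, so verticals have arcs of length one.
\item It takes the nested $P$-arcs $(0,2m+1),(1,2m),\dots,(m,m+1)$.
\item It chooses one arrow from every conjugacy class with each arc: one or two for the outer arc, three for each inner arc.
\item It invokes \cite[Proposition~3.3]{ABB+25}, that partial rainbows are minimal.
\end{enumerate}
For $n=2$ this gives $\{c_0\to d_2,\ c_1\to c_2,\ c_1\to d_1,\ d_0\to d_1\}$.

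\textbf{Upper bound.} Your bridge-exclusion claim is false as stated: cyclic-layer and dihedral-layer generators with overlapping level intervals need not be dependent. In $D_{p^2}$, up to conjugacy and identities,
\[
\langle d_1\to d_2\rangle=\{d_1\to d_2,\ c_1\to c_2,\ c_1\to d_1,\ c_1\to d_2,\ c_0\to d_0\}.
\]
This misses $c_0\to c_2$, while $\langle c_0\to c_2\rangle$ contains no arrow with dihedral source. Hence $\{c_0\to c_2,\ d_1\to d_2\}$ is minimal. Likewise, the extremal set for $n=3$, $\{c_0\to c_3,\ c_0\to d_2,\ c_1\to c_2,\ c_1\to d_1,\ d_0\to d_1\}$, contains both $c_0\to c_3$ and $d_0\to d_1$.

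The count ``three generators per two consecutive levels'' is also undefined for generators spanning many levels, such as $c_0\to d_2$. So there is nothing to sum.

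Replacing a bridge $C_{p^a}\to D_{p^b}$ by $C_{p^a}\to C_{p^b}$ and $C_{p^b}\to D_{p^b}$ changes the transfer system, since the vertical edge is not in $\langle C_{p^a}\to D_{p^b}\rangle$ for $a<b$. Moreover, ``without increasing $|S|$'' is the wrong direction for an upper bound, and such arrows genuinely occur in extremal sets.

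Nor can the $C_{p^nq}$ count be transplanted directly. Here $d_i\to d_j$ restricts along non-containing conjugates to $c_i\to d_l$ for $i\le l<j$, which has no analogue for $C_{p^nq}$.

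What is missing is a global counting device. The paper sorts generators into $C$-, $D$- and $X$-arrows and proves $C+D\le n$, $C+X\le n+1$, $D+X\le n+1$ by induction on $n$. The induction strips off the left-anchored generators. There is at most one per type, and $d_0\to d_k$ or $c_0\to d_k$ excludes $c_0\to c_j$ for $j\le k$, because $\langle s\rangle\cap C_{p^j}=1$. It then shows that no remaining generator can bridge the gap between two left-anchored ones. Adding the three inequalities gives $2|S|\le 3n+2$.
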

See Theorem~\ref{tcomplex}.

\begin{thme}\label{thm:sd_lower}
For the semidihedral group $\SD_{2^n}$ with $n\ge 4$,
\[
  c(\SD_{2^n})\ge\left\lfloor\frac{5(n-1)}{2}\right\rfloor
  =\begin{cases}5m-3&n=2m,\\5m&n=2m+1.\end{cases}
\]
\end{thme}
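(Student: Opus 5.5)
The plan is to bound $c(\SD_{2^n})$ from below by exhibiting one explicit transfer system $T\in\cT(\SD_{2^n})$ together with a generating set $S$ of size $\lfloor 5(n-1)/2\rfloor$. We then show that $S$ is minimal, i.e.\ no edge of $S$ lies in the transfer system generated by the others. Since $c(G)$ is a maximum over all transfer systems, this gives the bound directly.

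Write $\SD_{2^n}=\langle a,b\mid a^{2^{n-1}}=b^2=1,\ bab=a^{2^{n-2}-1}\rangle$. Up to conjugacy, its subgroup lattice consists of three kinds of subgroups:
\begin{itemize}
\item the cyclic chain $C_{2^k}=\langle a^{2^{n-1-k}}\rangle$ for $0\le k\le n-1$;
\item dihedral subgroups $\langle a^{2^{n-1-k}},b\rangle\cong D_{2^{k+1}}$, together with the order-two reflections $\langle a^{2i}b\rangle$;
\item generalized quaternion subgroups $\langle a^{2^{n-1-k}},ab\rangle\cong Q_{2^{k+1}}$ for $k\ge 2$, together with the cyclic subgroups $\langle ab\rangle\cong C_4$.
\end{itemize}
Each dihedral or quaternion subgroup of order $2^{k+1}$ with $k\le n-2$ contains $C_{2^k}$ and lies in a subgroup of order $2^{k+2}$ of the same type. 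The lattice therefore looks like the cyclic chain of length $n-1$, decorated at each level by a dihedral and a quaternion ``side'' chain, glued at the top to the three maximal subgroups $C_{2^{n-1}}$, $D_{2^{n-1}}$ and $Q_{2^{n-1}}$.

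The construction will mimic the $C_{p^nq}$ computation of \cite{ABB+25} and the bridge pattern used for $D_{p^n}$ in Theorem~\ref{tcomplex}. I would group the levels $k$ in consecutive pairs $\{2j-1,2j\}$. In each pair I place five generating edges:
\begin{itemize}
\item one edge on the cyclic chain, $C_{2^{k-1}}\to C_{2^{k}}$;
\item one ``vertical'' edge on each side chain, of the form $D\to D'$ and $Q\to Q'$ between consecutive levels;
\item two ``diagonal'' edges $C_{2^{k}}\to D_{2^{k+1}}$ and $C_{2^{k}}\to Q_{2^{k+1}}$ (or their analogues shifted by one level).
\end{itemize}
The edges are staggered so that no two of them compose and no restriction of one onto a subgroup produces another. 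When $n-1$ is odd, the leftover level contributes two or three further edges, accounting for the two cases $5m-3$ and $5m$. Concretely, $S$ will be defined by a short table indexed by $j$, and $T=\langle S\rangle$.

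Minimality is the central step. I would first use the description of $\langle S\rangle$ from the preliminaries: its edges are obtained by iterating conjugation, restriction $K\to H\ \Rightarrow\ K\cap L\to L$ for $L\le H$, and composition. For each $s=(K\to H)\in S$, I would then exhibit an \emph{upward-closed invariant} that every edge in $\langle S\setminus\{s\}\rangle$ satisfies but $s$ violates. The natural invariant is a pair consisting of the target $H$ and the ``type'' of $K$ (cyclic, dihedral, or quaternion) together with its level. Conjugation preserves type and level, and restriction only lowers targets. So it suffices to show two things:
\begin{itemize}
\item no other generator has a target containing $H$ with a source meeting $H$ in exactly $K$;
\item no chain of composable generators runs from (a conjugate of) $K$ to $H$.
\end{itemize}
The staggering within the two-level blocks is designed precisely so that both composition chains and restriction shadows miss $s$. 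This is the bridge-exclusion argument from Theorem~\ref{tcomplex}, run in parallel on the dihedral and quaternion sides.

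The main obstacle is the restriction closure near the cyclic chain, where the dihedral and quaternion sides meet. Restricting a diagonal edge $C_{2^k}\to D_{2^{k+1}}$ to a smaller dihedral subgroup produces edges of the form $C_{2^{k-1}}\to D_{2^{k}}$. Such edges could fill in a neighbouring block's generator, or compose with a cyclic-chain edge to produce a generator from another block. I would first check the small case $n=4,5$ by hand, where the bound gives $3$ and $5$, to fix the exact staggering. I would then verify that each restriction of a block-$j$ edge lands strictly inside block $j$ or is already a consequence of block $j$ alone. This makes the blocks independent and gives the count $5\cdot\#\text{blocks}$ plus the boundary correction.
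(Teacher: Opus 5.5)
There is a genuine gap. You never actually specify $S$: the ``short table indexed by $j$'' is deferred to a later hand computation. Worse, the shape you describe cannot be made to work by any choice of staggering. Every edge you propose is an index-two inclusion: $C_{2^{k-1}}\to C_{2^k}$, the side-chain edges $D\to D'$ and $Q\to Q'$ between consecutive levels, and the diagonals $C_{2^k}\to D_{2^{k+1}}$, $C_{2^k}\to Q_{2^{k+1}}$. A composite of two non-trivial edges has index at least $4$, and the transfer system generated by a set is the transitive closure of its conjugates and restrictions. So an index-two edge $s$ lies in $\langle S\setminus\{s\}\rangle$ exactly when it is conjugate to a restriction of another generator.

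Such restrictions collide constantly. Restricting $C_{2^k}\to\langle a^{2^{n-1-k}},b\rangle$ to $\langle a^{2^{n-1-k'}},b\rangle$ gives the diagonal $C_{2^{k'}}\to\langle a^{2^{n-1-k'}},b\rangle$ for every $k'<k$. Hence at most one dihedral and one quaternion diagonal can occur in all of $S$, not two per block. Likewise, restricting $\langle a^{2^{n-k}},b\rangle\to\langle a^{2^{n-1-k}},b\rangle$ to $C_{2^k}$ gives the cyclic edge $C_{2^{k-1}}\to C_{2^k}$, and similarly on the quaternion side.

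Globally, the conjugacy classes of index-two inclusions are covered by $2n-2$ chains for the relation ``is a restriction of'':
\begin{enumerate}[(a)]
\item the dihedral diagonals, topped by $M_c\to G$;
\item the quaternion diagonals, topped by $M_d\to G$;
\item $C_{2^{n-2}}\to M_c$, below $M_q\to G$;
\item each $C_{2^{k-1}}\to C_{2^k}$, below the dihedral edge from order $2^k$ to $2^{k+1}$ ($1\le k\le n-2$);
\item the $n-3$ remaining quaternion edges, as singletons.
\end{enumerate}
So any minimal generating set built from such edges has at most $2n-2<\lfloor 5(n-1)/2\rfloor$ elements. Your sanity-check values are also wrong: the bound is $7$ for $n=4$ and $10$ for $n=5$, not $3$ and $5$.

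The missing idea is to use long, \emph{nested} edges rather than short staggered ones, and this is the paper's route. Take the rank $P(H)=\log_2|H|$ and strictly nested arcs: $(0,n),(1,n-1),\dots,(m,m+1)$ if $n=2m+1$, or $(1,n),(2,n-1),\dots,(m,m+1)$ if $n=2m$. On each arc choose one edge from every conjugacy class of inclusions with that arc:
\begin{enumerate}[(a)]
\item five on interior arcs, namely $C\to C$, $C\to D$, $C\to Q$, $D\to D$, $Q\to Q$ (the other pairings are impossible);
\item four on $(1,k)$ with $k\le n-1$, since a reflection embeds only in dihedral-type subgroups;
\item one or two when an endpoint is $1$ or $G$.
\end{enumerate}
This gives $1+4+5(m-1)=5m$, resp.\ $2+5(m-1)=5m-3$.

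Minimality is the partial-rainbow criterion \cite[Proposition~3.3]{ABB+25}, and its proof is exactly the check your invariant was aiming at. A restriction $K\cap L\to L$ of $K\to H$ has source rank at most $P(K)$ and target rank at most $P(H)$, with equality at the target only if $L=H$. Consider a composite of restrictions ending at the target of $s$. Its last step comes from a generator whose arc reaches at least as high. By nesting, that step starts strictly below the source of $s$ unless the generator lies on the same arc as $s$. On that arc one recovers only conjugates of the other generators there, which are non-conjugate to $s$. Finally, the bridge-exclusion argument you allude to is an upper-bound tool; the matching lower bound for $D_{p^n}$ in Section~4 is also a rainbow.
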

See Theorem~\ref{tsdlower}.

\vspace{0.5cm}

{\bf Organization.}
Section~2 recalls the necessary background on transfer systems, meet
irreducible subgroups, width, and complexity.
Section~3 contains the proofs of Theorems~\ref{tsemid} and~\ref{tfrob}.
Section~4 establishes Theorem~\ref{tcomplex}, computing the complexity of
$D_{p^n}$.
Section~5 proves Theorem~\ref{tsdlower}, the lower bound for $c(\SD_{2^n})$.

\section{Background}

Throughout this paper, \(G\) denotes a finite group.
We write \(\Sub(G)\) for the set of all subgroups of \(G\), partially ordered
by inclusion.
Equipped with intersection as meet and subgroup generation as join,
\(\Sub(G)\) forms a finite lattice.
Many of the structures arising in equivariant homotopy theory are controlled
by additional order-theoretic data imposed on this lattice.

\subsection{Transfer systems}

Transfer systems were introduced as a combinatorial model for the homotopy
theory of \(N_\infty\)-operads.
They encode, in purely order-theoretic terms, which transfer (or norm) maps
are present between subgroups.

\begin{definition}
A \emph{\(G\)-transfer system} is a partial order \(\to\) on \(\Sub(G)\)
such that:
\begin{enumerate}[(i)]
\item \(K\to H\) implies \(K\le H\);
\item \(H\to H\) for every \(H\le G\);
\item \(L\to K\) and \(K\to H\) imply \(L\to H\);
\item \(K\to H\) and \(L\le H\) imply \((K\cap L)\to (H\cap L)\);
\item \(K\to H\) implies \(gKg^{-1}\to gHg^{-1}\) for all \(g\in G\).
\end{enumerate}
Reflexive edges are customarily omitted from diagrams.
\end{definition}

Condition~(i) ensures that transfers only occur along subgroup inclusions.
Conditions~(ii) and~(iii) state that $\to$ is reflexive and transitive,
hence a refinement of the inclusion order.
Condition~(iv) encodes compatibility with restriction along subgroups and
reflects the double-coset behavior of transfers in equivariant homotopy
theory.
Finally, condition~(v) enforces conjugation invariance, since transfer data
depends only on subgroups up to conjugacy.

A transfer system may therefore be viewed as a choice of certain subgroup
inclusions along which transfer maps are permitted, subject to natural closure
conditions.
The collection of all \(G\)-transfer systems forms a poset under inclusion of
relations.

\subsection{Meet irreducible subgroups}

The structure of transfer systems is governed by the lattice-theoretic
properties of \(\Sub(G)\), and in particular by certain distinguished
subgroups.

\begin{definition}
A proper subgroup \(K<G\) is \emph{meet irreducible} if it cannot be written
as the intersection of two strictly larger subgroups.
Equivalently, \(K\) possesses a unique minimal overgroup (the smallest
subgroup properly containing \(K\)).
\end{definition}

\begin{definition}[{\cite[Definition 4.1]{ABB+25}}]
The \emph{width} \(w(G)\) is the cardinality of a minimal generating set of
the complete \(G\)-transfer system (the transfer system containing all edges
\(K\to H\) with \(K\le H\)).
\end{definition}

Here, a generating set is a collection of relations whose closure under
axioms~(ii)--(v) yields the complete transfer system.
In other words, starting from these chosen edges and repeatedly applying
transitivity, restriction via intersections, and conjugation, one obtains
every relation \(K\to H\) with \(K\le H\).
The width therefore measures the minimal number of primitive transfer
relations required to force all possible transfers.

The fundamental connection between width and meet irreducible subgroups is
given by the following result.

\begin{proposition}[{\cite[Proposition 4.5]{ABB+25}}]\label{prop:width}
For any finite group \(G\), \(w(G)\) equals the number of conjugacy classes
of proper meet irreducible subgroups of \(G\).
Moreover, the set $\{[H]\to G \mid H<G \text{ meet irreducible}\}$ is a
minimal generating set for the complete transfer system.
\end{proposition}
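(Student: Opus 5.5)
The plan has two halves: show that the edges $[H]\to G$ with $H$ meet irreducible generate the complete transfer system, and then show that every generating set needs one edge per such conjugacy class.

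For \textbf{generation}, let $T$ be the transfer system generated by $\{H\to G \mid H<G \text{ meet irreducible}\}$, which is closed under conjugation by axiom (v). The first step is to show $K\to G$ lies in $T$ for every proper $K<G$. I will argue by downward induction on $|K|$. If $K$ is meet irreducible this is a generator. Otherwise $K=A\cap B$ with $K<A$ and $K<B$. By induction $A\to G$ and $B\to G$ lie in $T$. Restricting $A\to G$ along $B\le G$ (axiom (iv)) gives $A\cap B\to B$, and transitivity with $B\to G$ gives $K\to G$. Every proper subgroup is a finite intersection of meet irreducible ones, so the induction terminates; at $G$ itself the statement is trivial. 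Finally, for arbitrary $K\le H$, restricting $K\to G$ along $H$ gives $K\cap H=K\to H$. Hence $T$ is the complete transfer system.

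For \textbf{minimality}, fix a meet irreducible $H$ with unique minimal overgroup $H^+$. I will show that any generating set must contain an edge $K\to L$ with $K$ conjugate to $H$ and $K<L$. Let $\mathcal{T}_H$ be the set of all relations $K\le L$ except those with $K$ conjugate to $H$ and $K\ne L$. It suffices to check that $\mathcal{T}_H$ is a transfer system: then any set of edges avoiding this conjugacy class generates something inside $\mathcal{T}_H$, which misses $H\to G$.
\begin{itemize}
\item Conjugation invariance and reflexivity are clear.
\item Transitivity: if $K\to L\to M$ are in $\mathcal{T}_H$ and $K$ is conjugate to $H$, then $K=L$ (since $K\to L$ is allowed), so $K\to M$ is the edge $L\to M$, which is allowed.
\item Restriction: suppose $K\to L$ is in $\mathcal{T}_H$, $M\le L$, and $K\cap M=gHg^{-1}=:H'$ with $H'\ne L\cap M$. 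Then $H'<L\cap M$ and $H'< K$ (the latter because $K$ is not conjugate to $H$, unless $K=L$, in which case $K\cap M=L\cap M$). Both $K$ and $L\cap M$ therefore contain the unique minimal overgroup $H'^+$, so $H'^+\le K\cap L\cap M=H'$, a contradiction.
\end{itemize}

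Distinct conjugacy classes are distinguished by the source of the required edge, so any generating set has at least as many elements as there are conjugacy classes of meet irreducibles. The generating set above attains this count (one edge per class suffices by conjugation invariance), so it is minimal and $w(G)$ equals the number of classes.

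The main obstacle is the restriction axiom for $\mathcal{T}_H$. The key input there is the unique-minimal-overgroup characterization of meet irreducibility.
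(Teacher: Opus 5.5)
Your proof is correct. There is no in-paper argument to compare it with: the paper imports this statement from \cite[Proposition~4.5]{ABB+25}. Its definition of $w(G)$ as the cardinality of ``a'' minimal generating set also tacitly relies on \cite[Corollary~2.14]{ABB+25} to be well defined. What you give instead is a self-contained argument that uses only axioms (i)--(v) and the unique-cover characterization of meet irreducibility.

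The generation half is sound: downward induction on $|K|$, and when $K=A\cap B$, restrict $A\to G$ along $B$ and compose with $B\to G$. The key device in the minimality half is a clean way to get the lower bound: ``all inclusions except the non-identity edges out of the class of $H$'' is itself a transfer system, precisely because $H$ has a unique cover. Your restriction check is right: $K=H'$ is ruled out because it would force $K=L$, and then $H'^{+}$ is trapped inside $K\cap M=H'$.

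One extra line removes the dependence on \cite[Corollary~2.14]{ABB+25}. Take any non-identity edge $H'\to L$ with $H'$ conjugate to $H$. Then $H'^{+}\le L$, and restricting along $H'^{+}$ gives $H'\to H'^{+}$. For meet irreducible $K$, your induction then runs verbatim using the covering edge $K\to K^{+}$ composed with $K^{+}\to G$, in place of the generator $K\to G$. So any choice of one non-identity edge out of each conjugacy class already generates the complete transfer system. Hence every inclusion-minimal generating set has exactly one edge per class, not just the particular set you exhibit.
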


Thus computing \(w(G)\) reduces to classifying conjugacy classes of meet
irreducible subgroups.

\subsection{Complexity and partial rainbows}

The complexity invariant is a companion to the width, recording the hardest
transfer system to generate rather than the easiest.

\begin{definition}[{\cite{ABB+25}}]
If $m(T)$ denotes the size of any minimal generating set for $T\in\cT(G)$
(well-defined by \cite[Corollary~2.14]{ABB+25}), the \emph{complexity} of
$G$ is
\[
  c(G) := \max\bigl\{m(T)\mid T\in\cT(G)\bigr\}.
\]
\end{definition}

A key tool for producing lower bounds on complexity is the notion of a
partial rainbow.

\begin{definition}[{\cite[Definitions 3.1--3.2]{ABB+25}}]
A \emph{rainbow} on $\Sub(G)$ is a collection of $P$-arcs
$(j_1,k_1),\dots,(j_r,k_r)$ that are pairwise nested (i.e., strictly
ordered by inclusion as intervals).
A \emph{partial rainbow} is a set $S$ of arrows in $I(\Sub(G))$ whose
$P$-arcs form a rainbow, with at most one arrow chosen from each conjugacy
class.
\end{definition}

By \cite[Proposition~3.3]{ABB+25}, every partial rainbow is a minimal
generating set for the transfer system it generates, so any partial rainbow
of size $s$ witnesses $c(G)\ge s$.

\section{Width computations}

We now compute the width invariant for the families of groups appearing in
the main results, starting with the semidihedral $2$--groups.

\subsection{Semidihedral groups \(\SD_{2^n}\)}

We now determine the width of the semidihedral $2$--groups.
Throughout this subsection fix an integer $n\ge 4$ and let
\[
  G = \SD_{2^n}
  = \langle a,b \mid a^{2^{n-1}}=b^2=1,\; bab=a^{2^{n-2}-1}\rangle.
\]
Set $N=2^{n-1}$ and $k=N/2-1=2^{n-2}-1$.
A direct computation shows $k^2\equiv 1\pmod{N}$.
Conjugation by $a$ acts on $b$ via
\[
  a^r b a^{-r} = b a^{r(k-1)},
\]
and since $k-1=N/2-2$ is even, the exponent $r(k-1)$ is divisible by $2$,
so $a^{r(k-1)}\in\langle a^2\rangle$.
The centre of $G$ is
\[
  Z(G) = \{1,a^{N/2}\} =: \langle z\rangle,
\]
which has order two.

\begin{theorem}\label{tsemid}
For $n\ge 4$ one has $w(\SD_{2^n})=2n-2$.
\end{theorem}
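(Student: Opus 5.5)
The plan is to apply Proposition~\ref{prop:width}, which reduces the theorem to counting conjugacy classes of proper meet irreducible subgroups of $G=\SD_{2^n}$. I will first write down an explicit list of all subgroups, then find each subgroup's minimal overgroups, and finally count conjugacy classes.

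\emph{Step 1: the subgroup list.} Using $bab=a^{k}$ with $k=N/2-1$, one computes $(a^jb)^2=a^{j(1+k)}=a^{jN/2}$. Hence $a^jb$ is an involution when $j$ is even, and has order $4$ with square $z$ when $j$ is odd. Every subgroup $H\le G$ meets $\langle a\rangle$ in some $\langle a^{2^i}\rangle$, where $0\le i\le n-1$. If $H\not\le\langle a\rangle$, then $H=H_{i,j}:=\langle a^{2^i},a^jb\rangle$ with $j$ taken modulo $2^i$. The product of two non-rotation elements is a rotation, which gives this form. For $j$ odd one has $z\in\langle a^jb\rangle$, so $H_{n-1,j}=H_{n-2,j}$; the odd family therefore only runs over $1\le i\le n-2$. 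The even family runs over $1\le i\le n-1$, with $H_{n-1,j}=\langle a^jb\rangle$ of order $2$.

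\emph{Step 2: meet irreducibility.} Every nontrivial cyclic $\langle a^{2^i}\rangle$ with $i\ge1$ lies in both $\langle a^{2^{i-1}}\rangle$ and $\langle a^{2^i},b\rangle$, so these two groups contain it properly and are incomparable. Any overgroup of $\langle a^{2^i}\rangle$ contains a minimal one, and the minimal overgroups contained in each of these two groups are distinct. So $\langle a^{2^i}\rangle$ has at least two minimal overgroups and is not meet irreducible. The trivial subgroup is likewise not meet irreducible. The group $\langle a\rangle$ is maximal and hence meet irreducible.

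For $H_{i,j}$, the key claim is that every subgroup properly containing it has the form $H_{m,j}$ with $m<i$. Indeed, an overgroup with the same rotation part $\langle a^{2^i}\rangle$ and containing $a^jb$ equals $H_{i,j}$. For $i=n-1$ and $j$ even, any additional non-rotation element would force a nontrivial rotation, hence $z$. Since $H_{m,j}$ is determined by $j$ modulo $2^m$, the unique minimal overgroup of $H_{i,j}$ is $H_{i-1,j}$, which is $G$ when $i=1$. So every $H_{i,j}$ is meet irreducible. I expect this step to be the main obstacle: one must check carefully that no overgroup escapes this form, in particular around the collapse $H_{n-1,\mathrm{odd}}=H_{n-2,\mathrm{odd}}$ and the order-$2$ subgroups $\langle a^{j}b\rangle$ with $j$ even.

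\emph{Step 3: conjugacy and counting.} From $a^rba^{-r}=ba^{r(k-1)}$ and $\gcd(N/2-2,N)=2$ (valid since $n\ge4$), conjugation by powers of $a$ realizes every even shift of $j$. Conjugation by $b$ preserves the parity of $j$, because $k$ is odd. Therefore, for each fixed $i$, the subgroups $H_{i,j}$ fall into exactly two conjugacy classes according to the parity of $j$ when $i\ge1$; for $i=1$ these are the two maximal subgroups $\langle a^2,b\rangle$ and $\langle a^2,ab\rangle$. Counting gives $n-1$ classes in the even family, $n-2$ in the odd family, and one more for $\langle a\rangle$. The total is $(n-1)+(n-2)+1=2n-2$, and Proposition~\ref{prop:width} then yields $w(\SD_{2^n})=2n-2$.
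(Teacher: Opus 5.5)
Your argument is correct and reaches the same three families as the paper: $\langle a\rangle$, the dihedral-type strand (your even $j$), and the quaternion-type strand (your odd $j$). You organize it differently, though.

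The paper works top-down. It lists the maximal subgroups $M_c,M_d,M_q$ and asserts that every proper meet irreducible subgroup lies in exactly one of them. It then reads off unique covers from the known subgroup structure of $D_{2^{n-1}}$ and $Q_{2^{n-1}}$, and states without computation that subgroups of a given order in each strand are conjugate.

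You instead parametrize every subgroup outside $\langle a\rangle$ as $H_{i,j}=\langle a^{2^i},a^jb\rangle$. You show directly that the overgroups of $H_{i,j}$ form the chain $H_{i-1,j}\subset H_{i-2,j}\subset\cdots\subset H_{1,j}\subset G$, which gives the unique cover in $G$ at once. The conjugacy classes then come from the explicit action of $a^r$ and $b$ on the index $j$. This is exactly where $n\ge 4$ enters, through $\gcd(N/2-2,N)=2$.

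What your route buys is a self-contained check of steps the paper only asserts. In particular, the paper's inference that subgroups lying in two maximals ``therefore cannot be meet irreducible'' is not a valid general principle. In $Q_8$ the trivial subgroup lies in all three maximals yet has the unique cover $\langle -1\rangle$. Here the conclusion really rests on your observation that each nontrivial $\langle a^{2^i}\rangle$ has the two incomparable covers $\langle a^{2^{i-1}}\rangle$ and $\langle a^{2^i},b\rangle$. The paper's route buys the maximal-subgroup picture with three strands, which is the $C$/$D$/$Q$ bookkeeping reused in Section~5.

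One small wording fix: the claim in Step~3 of ``two classes for each $i\ge 1$'' holds only for $1\le i\le n-2$, with just the even class at $i=n-1$. Your final count $(n-1)+(n-2)+1$ already accounts for this.
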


\begin{proof}
By Proposition~4.5 of \cite{ABB+25}, the width equals the number of
conjugacy classes of proper meet irreducible subgroups of $G$.
We determine these subgroups explicitly.

The group $\SD_{2^n}$ possesses three maximal subgroups of index two, each
normal:
\[
  M_c = \langle a\rangle\cong C_{2^{n-1}},\quad
  M_d = \langle a^2,b\rangle\cong D_{2^{n-1}},\quad
  M_q = \langle a^2,ab\rangle\cong Q_{2^{n-1}}.
\]
Their pairwise intersections equal \(\langle a^2\rangle\cong C_{2^{n-2}}\).
Subgroups contained in two distinct maximals lie in this intersection and
therefore cannot be meet irreducible; consequently any proper meet irreducible
subgroup is contained in exactly one maximal.

The maximal \(M_c\) itself is meet irreducible, its unique cover being \(G\);
no proper subgroup of \(M_c\) qualifies.

Now consider \(M_d\).
Its rotation subgroup \(\langle a^2\rangle\) has order \(2^{n-2}\); the
reflections are the elements \(x_s = a^{2s}b\) (\(0\le s<2^{n-2}\)).
Each reflection lies in a unique Klein four
$K_s = \{1,z,x_s,x_sz\}$, which is its unique cover.
All reflections are conjugate under \(a\), yielding one conjugacy class.
Each \(K_s\) similarly has a unique cover, the dihedral group \(D_8\)
generated by \(a^{2^{n-3}}\) and \(x_s\); again one class.
For \(k = 3,\dots,n-2\), the dihedral subgroups
$D_{2^k} = \langle a^{2^{n-k}},x_s\rangle$ form a chain, each with unique
cover \(D_{2^{k+1}}\) (or \(M_d\) when \(k=n-2\)).
All subgroups of a given order are conjugate, contributing one class per
order \(2^3,\dots,2^{n-2}\).
Counting orders \(2,4,8,\dots,2^{n-2}\) gives \(n-2\) classes from \(M_d\).

Turning to \(M_q\), a generalised quaternion group.
The elements of order four outside \(\langle a^2\rangle\) are
$y_s = a^{2s+1}b$; each \(\langle y_s\rangle\) is cyclic of order four and
has unique cover the quaternion group \(Q_8\) generated by \(a^{2^{n-3}}\)
and \(y_s\).
All such cyclic subgroups are conjugate, giving one class.
For \(t = 3,\dots,n-2\), the quaternion subgroups
$Q_{2^t} = \langle a^{2^{n-t}},y_s\rangle$ form a chain, each with unique
cover \(Q_{2^{t+1}}\) (or \(M_q\) when \(t=n-2\)), and all subgroups of a
given order are conjugate.
This yields \(1 + (n-4) = n-3\) classes from \(M_q\).

The three maximals are pairwise non-conjugate because their isomorphism types
differ; they contribute $3$ classes.
Therefore the number of conjugacy classes of proper meet irreducible subgroups
of $G$ equals $2n-2$, and hence $w(\SD_{2^n})=2n-2$, as claimed.
\end{proof}

\subsection{Frobenius groups \(\AGL(1,p^n)\)}

We now compute the width for the affine linear groups over finite fields.
Let \(p\) be prime and \(n\ge 1\), and set
\[
  G = \AGL(1,\F_{p^n}).
\]
Write \(V=\F_{p^n}^+\) for the additive group of the field and
\(H=\F_{p^n}^\times\) for its multiplicative group.
Then \(V\) is an elementary abelian \(p\)-group of order \(p^n\),
\(H\) is cyclic of order \(m=p^n-1\), and $G=V\rtimes H$,
where \(H\) acts on \(V\) by multiplication.
This is a Frobenius group with kernel \(V\) and complement \(H\).

Every subgroup of \(G\) can be written uniquely in the form \(A\rtimes D\),
where \(A\le V\) is an \(\F_p\)-subspace and \(D\le H\) normalises \(A\).
Conjugation by an element \(h\in H\) sends \((A,D)\) to \((hA,D)\), while
conjugation by \(V\) preserves the subgroup \(D\le H\).
In particular, subgroups with distinct \(D\)-parts are never conjugate.

\begin{theorem}\label{tfrob}
Let \(G=\AGL(1,\F_{p^n})\) with \(p\) prime and \(n\ge 1\).  Then
\[
  w(G) = \Omega(p^n-1)+\tau(n),
\]
where \(\Omega(m)\) denotes the total number of prime factors of \(m\)
counted with multiplicity, and \(\tau(n)\) denotes the number of positive
divisors of \(n\).
\end{theorem}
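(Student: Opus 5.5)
By Proposition~\ref{prop:width}, the plan is to classify the proper meet irreducible subgroups of $G=V\rtimes H$ up to conjugacy. I use the description $A\rtimes D$ recalled above. A subspace $A\le V$ is $D$-invariant exactly when $A$ is a vector space over the subfield $\F_p(D)\subseteq\F_{p^n}$ generated by $D$. Conversely, the stabiliser of $A$ in $H$ is $E_A^\times$, where $E_A=\{\lambda\in\F_{p^n}:\lambda A\subseteq A\}$ is the largest subfield over which $A$ is a subspace (for $A\neq 0$). I split into two cases: $A=V$ and $A\neq V$.

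\emph{Case $A=V$.} Every overgroup of $V\rtimes D$ contains $V$, so the interval above $V\rtimes D$ in $\Sub(G)$ is isomorphic to the interval above $D$ in the cyclic group $H\cong C_m$, where $m=p^n-1$. A subgroup of $C_m$ of index $d$ has one cover for each prime dividing $d$. It therefore has a unique cover iff $d>1$ is a prime power $q^i$ with $q^i\mid m$. There are exactly $\Omega(m)$ such indices. Since $H$ is cyclic, each $D$ is the unique subgroup of its order, and subgroups with distinct $D$-parts are non-conjugate. This case therefore contributes exactly $\Omega(p^n-1)$ classes.

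\emph{Case $A\neq V$.} I will show that $A\rtimes D$ is meet irreducible iff $D=E^\times$ for a subfield $E=\F_{p^e}$ with $e\mid n$, and $A$ is an $E$-hyperplane of $V$ (when $e=n$ this means $A=0$ and the subgroup is $H$). The argument has two parts.
\begin{enumerate}[(a)]
\item Sufficiency. If $A$ is an $E$-hyperplane and $D=E^\times$, then $E_A=E$: for $e<n$ a hyperplane over $E$ is not a subspace over any larger field, since its dimension over such a field would exceed that of $V$ minus one. Hence $A\rtimes D$ cannot be enlarged by growing $D$ while keeping $A$. Any overgroup with a strictly larger translation part $A'\supsetneq A$ must have $A'$ an $E$-subspace, which forces $A'=V$, and then the smallest such overgroup is $V\rtimes E^\times$. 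So $V\rtimes E^\times$ is the unique cover. For $A=0$, $D=H$: the only $H$-invariant subspaces are $0$ and $V$, so the unique cover is $G$.
\item Necessity. Here I must control covers that are not of the visible form $A'\rtimes D'$ with $D'\le H$, that is, conjugates of such subgroups by elements of $V$. I will use that $N_G(A\rtimes D)$ acts on the covers, and argue via the projection $G\to H$ and the translation part $V\cap(\cdot)$, which are both monotone. Then:
\begin{itemize}
\item If $D\subsetneq E_A^\times$, there are two distinct covers: one inside $A\rtimes E_A^\times$, and one of the form $A'\rtimes D$ with $A'$ a minimal $\F_p(D)$-subspace strictly containing $A$.
\item If $D=E_A^\times$ but $A$ is not an $E_A$-hyperplane, there are several distinct intermediate spaces $A\subsetneq A'\subsetneq V$ over $E_A$, each giving a different cover.
\end{itemize}
\end{enumerate}

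\emph{Conjugacy count.} For fixed $e\mid n$, conjugation by $V$ fixes $A$. Conjugation by $h\in H$ sends $\ker(\mathrm{Tr}_{\F_{p^n}/E}(\lambda\,\cdot))$ to $\ker(\mathrm{Tr}_{\F_{p^n}/E}(\lambda h^{-1}\,\cdot))$, and every $E$-hyperplane has this form, so $H$ acts transitively on $E$-hyperplanes. Hence there is exactly one class for each $e\mid n$, and this case contributes $\tau(n)$ classes.

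Adding the two cases gives $w(G)=\Omega(p^n-1)+\tau(n)$.

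I expect the main obstacle to be the necessity direction in the case $A\neq V$. Overgroups and covers need not have their $D$-part inside the fixed complement $H$, so one must check carefully that a non-hyperplane $A$, or a non-saturated $D$, really produces two genuinely distinct covers. Small edge cases such as $p=2$ with $D$ trivial also need attention.
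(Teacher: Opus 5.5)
Your proposal is correct and follows essentially the paper's proof. Both arguments split the meet irreducible subgroups into the normal subgroups $V\rtimes D$, counted by $\Omega(p^n-1)$ via the cyclic quotient $G/V\cong H$, and the subgroups $A\rtimes E^\times$ with $A$ an $E$-hyperplane for a subfield $E$; the only cosmetic difference is that you absorb $H$ as the case $e=n$, whereas the paper counts it separately and then adds $\tau(n)-1$ classes for the proper divisors of $n$. Your version is also more careful than the paper's on two points it passes over: you justify that $H$ acts transitively on $E$-hyperplanes (the trace-form argument), and you account for subgroups being of the form $A\rtimes D$ with $D\le H$ only up to conjugation by $V$, which your projection and translation-part argument handles when showing that $V\rtimes E^\times$ is the unique cover.
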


\begin{proof}
As before we first classify proper meet irreducible subgroups into three
families.
The complement \(H\) is maximal, hence meet irreducible; all complements are
conjugate, so this contributes one conjugacy class.

Next, consider subgroups of the form \(V\rtimes D\) with \(D<H\).
These are normal.
Their overgroups are \(V\rtimes D'\) with \(D'\ge D\).
Since \(H\) is cyclic, \(V\rtimes D\) has a unique minimal overgroup among
subgroups of the form \(V\rtimes D'\) precisely when \(m/|D|\) is a prime
power (i.e., there is exactly one divisor $e$ with $|D|<e\le m$).
The proper subgroups \(D\) with this property are exactly those of order
\(m/p^t\) for each prime power \(p^t\mid m\); there are \(\Omega(m)\) of
them.
Each yields a distinct conjugacy class.

Now let \(r\) be a proper divisor of \(n\), and let \(L\subseteq\F_{p^n}\)
be the unique subfield of order \(p^r\).
Then \(L^\times\le H\) is cyclic of order \(p^r-1\).
Choose an \(L\)-hyperplane \(A\subset V\), i.e., an \(L\)-subspace with
\(\dim_L A = n/r-1\).
The subgroup \(K_{r,A}=A\rtimes L^\times\) is well defined.
We claim \(K_{r,A}\) is meet irreducible: any proper overgroup must be either
\(V\rtimes L^\times\) or \(G\), and because \(L^\times\) is the full
stabiliser of \(A\), the unique maximal overgroup is \(G\).
For fixed \(r\), all \(K_{r,A}\) are conjugate under \(H\); different \(r\)
give distinct conjugacy classes.
The number of proper divisors of \(n\) is \(\tau(n)-1\), contributing that
many classes.

Any other proper subgroup \(A\rtimes D\) with \(A<V\) and \(D<H\) fails to
be meet irreducible.
Let \(L\) be the subfield generated by \(D\); then \(D\le L^\times\) and
\(A\) is an \(L\)-subspace.
If \(A\) is not a hyperplane over \(L\), it has at least two distinct
\(L\)-superspaces, yielding incomparable overgroups.
If \(A\) is a hyperplane but \(D\neq L^\times\), then \(A\rtimes D\) is
contained in both \(A\rtimes L^\times\) and \(V\rtimes D\), which are
incomparable.
If \(A=V\) we are in the \(V\rtimes D\) case already covered.
Subgroups of the form \(D\) alone (\(A=0\)) lie in both \(H\) and
\(V\rtimes D\), giving two covers unless \(D=H\) (which is maximal).

Summing the three families:
\[
  w(G) = 1 + \Omega(m) + (\tau(n)-1) = \Omega(m)+\tau(n) = \Omega(p^n-1)+\tau(n),
\]
as claimed.
\end{proof}

\begin{example}
Take \(p=2\), \(n=3\).
Then \(G=\AGL(1,8)\) has order \(56\), \(m=7\), \(\Omega(7)=1\),
\(\tau(3)=2\), so \(w(G)=3\).
The three classes are: \(H\cong C_7\); \(V\) itself (since \(7/1\) is
prime); and an \(\F_2\)-hyperplane \(A\) of order \(4\) (the subfield case
with \(r=1\)).
\end{example}

\subsection{Modular maximal-cyclic groups \(M_n(2)\)}

Throughout this subsection, fix an integer $n\ge 4$ and set
\[
  G = M_n(2) = \langle a,b\mid a^{2^{n-1}}=b^2=1,\;
    bab^{-1}=a^{1+2^{n-2}}\rangle.
\]
The cyclic subgroup $\langle a\rangle$ has order $2^{n-1}$, and conjugation
by $b$ raises $a$ to the power $1+2^{n-2}$, which is an automorphism of
order two.
The centre and Frattini subgroup of $G$ coincide:
\[
  Z(G)=\Phi(G)=\langle a^2\rangle\cong C_{2^{n-2}}.
\]
Conjugation by $a^r$ maps $b$ to $a^{r\cdot 2^{n-2}}b$, so two elements
$a^s b$ and $a^t b$ generate conjugate subgroups if and only if
$s\equiv t\pmod{2^{n-2}}$; this governs the conjugacy structure among
subgroups meeting the coset $\langle a\rangle b$.

\begin{theorem}\label{tmod}
For $n\ge 4$ one has $w(M_n(2))=2(n-1)$.
\end{theorem}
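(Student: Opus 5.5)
By Proposition~\ref{prop:width}, it suffices to count conjugacy classes of proper meet irreducible subgroups of $G=M_n(2)$, and I will follow the template of Theorem~\ref{tsemid}. Since $G/\Phi(G)\cong C_2\times C_2$, there are exactly three maximal subgroups, all normal and all containing $\Phi(G)=\langle a^2\rangle$:
\[
  M_1=\langle a\rangle\cong C_{2^{n-1}},\quad
  M_2=\langle a^2,ab\rangle\cong C_{2^{n-1}},\quad
  M_3=\langle a^2,b\rangle\cong C_{2^{n-2}}\times C_2 .
\]
For $M_2$ I will check that $(ab)^2=a^{2+2^{n-2}}$ generates $\langle a^2\rangle$. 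For $M_3$ I will use that $b$ centralises $a^2$. Each maximal is meet irreducible, and the three are pairwise non-conjugate because they are normal and distinct. This gives $3$ classes.

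Next I will rule out most remaining subgroups. Any subgroup of $\langle a^2\rangle$ lies in all three maximals, so it has at least two covers and is not meet irreducible. A proper subgroup of the cyclic groups $M_1$ or $M_2$ lies in $\langle a^2\rangle$. Hence every further meet irreducible subgroup $H$ lies in $M_3$ and satisfies $H\not\le\langle a^2\rangle$.

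To test these subgroups I will use the following $p$-group criterion. A cover $H'$ of $H$ has $|H':H|=2$, so $H\trianglelefteq H'$. Therefore covers of $H$ correspond bijectively to subgroups of order $2$ in $N_G(H)/H$. Consequently $H$ is meet irreducible if and only if $N_G(H)/H$ is nontrivial and has a unique involution, i.e.\ it is cyclic or generalised quaternion.

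I then split the subgroups $H\le M_3$ with $H\not\le\langle a^2\rangle$ into two types. The first type is cyclic, $H=\langle a^{2s}b\rangle$. Here $(a^{2s}b)^2=a^{4s}$, so the order is governed by the $2$-adic valuation of $s$. The second type is non-cyclic, $H=\langle a^{2^{j}},a^{2s}b\rangle$. For each type I will compute $N_G(H)$ as follows. Since $M_3$ is abelian, $N_G(H)\ge M_3$. The remaining question is whether $a$ normalises $H$. By the conjugation rule $a^r b a^{-r}=a^{r2^{n-2}}b$, this happens exactly when $H$ contains the central involution $z=a^{2^{n-2}}$.

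I expect two outcomes. When $z\in H$, I expect $N_G(H)=G$, and $G/H$ should be cyclic only in a small number of cases, namely when $H$ is large. When $z\notin H$, I expect $N_G(H)=M_3$ and $M_3/H$ to be cyclic; the sample case $H=\langle b\rangle$ gives $M_3/H\cong C_{2^{n-2}}$, which qualifies. Conjugacy classes are then sorted by the rule from the setup: $\langle a^sb\rangle$ and $\langle a^tb\rangle$ are conjugate iff $s\equiv t \pmod{2^{n-2}}$. In particular, the two order-$2$ subgroups $\langle b\rangle$ and $\langle zb\rangle$ fuse.

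The main obstacle is this bookkeeping. I must pin down exactly which orders $2,4,\dots,2^{n-2}$ of cyclic subgroups $\langle a^{2s}b\rangle$ and which non-cyclic $\langle a^{2^j},a^{2s}b\rangle$ survive the criterion, and then merge conjugates correctly. The target count is $2n-5$ classes inside $M_3$, which together with the $3$ maximals gives $2(n-1)$. I will verify the resulting list against the small case $n=4$ (order $16$) by hand before writing the general argument.
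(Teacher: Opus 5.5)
\textbf{Verdict: genuine gap.} Your framework is sound and is a cleaner route than the paper's. In a $2$-group every cover of $H$ has index $2$, so the covers of $H$ are exactly the order-$2$ subgroups of $N_G(H)/H$. Your normaliser computation is also correct: for $H\le\langle a^2,b\rangle$ with $H\not\le\langle a^2\rangle$, $N_G(H)=G$ if $z\in H$ and $N_G(H)=\langle a^2,b\rangle$ otherwise. The paper instead lists the meet irreducible members of $\langle a^2,b\rangle$ level by level.

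However, your proposal stops exactly where the content of the theorem lies, and the one prediction you make there is false. You expect $G/H$ to be cyclic only for large $H$; in fact it is \emph{always} cyclic. Such an $H$ contains some $a^{2s}b$, so $b\in\langle a\rangle H$. When $z\in H$ the subgroup $H$ is normal, hence $G=\langle a\rangle H$ and $G/H\cong\langle a\rangle/(\langle a\rangle\cap H)$. So every such $H$ is meet irreducible. Moreover, $z\notin H$ occurs only when $|H|=2$, because $|H:H\cap\langle a^2\rangle|\le 2$ and every nontrivial subgroup of the cyclic group $\langle a^2\rangle$ contains $z$.

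With this, the bookkeeping is immediate. Besides $\langle a^2,b\rangle$ itself, the subgroups of $\langle a^2,b\rangle\cong C_{2^{n-2}}\times C_2$ not contained in $\langle a^2\rangle$ are $\langle b\rangle$ and $\langle zb\rangle$. In addition, for each $2\le m\le n-2$ there is exactly one cyclic subgroup $\langle a^{2^{n-m-1}}b\rangle$ and one non-cyclic subgroup $\langle a^{2^{n-m}},b\rangle$ of order $2^m$. That gives $2n-4$ subgroups in $2n-5$ classes. So your own target count requires all of them to be meet irreducible, which contradicts your stated expectation.

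Three smaller points. First, the inference ``lies in all three maximals, so it has at least two covers'' is a non sequitur. The trivial subgroup of $Q_8$ lies in all three maximals yet has a unique cover; the paper uses the same shortcut. Your own criterion repairs it: for $H\le\langle a^2\rangle$, $H$ is central and $G/H$ has two distinct involutions, $bH$ and the involution of $\langle aH\rangle$.

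Second, the setup rule ``$\langle a^sb\rangle$ and $\langle a^tb\rangle$ are conjugate iff $s\equiv t\pmod{2^{n-2}}$'' is correct for the generating elements, but its ``only if'' fails for subgroups. For $n=5$, $\langle a^2b\rangle=\langle a^6b\rangle$ although $2\not\equiv 6\pmod 8$. Sorting the $\langle a^{2s}b\rangle$ by $s$ would therefore overcount. Count subgroups instead: all those of order at least $4$ contain $z$ and are normal, so the only fusion is $\langle b\rangle\sim\langle zb\rangle$.

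Third, if you check your list against the paper's, its level-by-level list inside $\langle a^2,b\rangle$ should run over $2\le j\le n-2$, not $1\le j\le n-3$. At $j=1$ it re-lists the maximals $\langle a^2,b\rangle$ and $\langle ab\rangle$, and it omits the order-$4$ pair $\langle z,b\rangle$, $\langle a^{2^{n-3}}b\rangle$. The two slips cancel in the total.
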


\begin{proof}
We classify proper meet-irreducible subgroups of
\[
  G = M_n(2) = \langle a,b\mid a^{2^{n-1}}=b^2=1,\;
    bab^{-1}=a^{1+2^{n-2}}\rangle
\]
into conjugacy classes directly.

Since $G/\Phi(G)\cong C_2\times C_2$, there are exactly three maximal
subgroups, all normal and of index two:
\[
  M_1=\langle a\rangle\cong C_{2^{n-1}},\quad
  M_2=\langle a^2,b\rangle\cong C_{2^{n-2}}\times C_2,\quad
  M_3=\langle ab\rangle\cong C_{2^{n-1}}.
\]
Each is meet-irreducible with unique cover $G$; being normal, each forms its
own conjugacy class, contributing three classes.
Any subgroup contained in two distinct maximals cannot be meet-irreducible.
Proper subgroups of the cyclic groups $M_1$ and $M_3$ each lie in the unique
cyclic subgroup of half their order, which itself lies in all three maximals,
so no proper subgroup of $M_1$ or $M_3$ qualifies.

It remains to examine $M_2\cong C_{2^{n-2}}\times C_2$.
For $1\le j\le n-3$, the subgroup lattice of $M_2$ at each level $j$ contains
exactly two meet-irreducible members: the subgroup
$\langle a^{2^j}, b\rangle\cong C_{2^{n-1-j}}\times C_2$, whose unique cover
inside $M_2$ is $\langle a^{2^{j-1}},b\rangle$, and the cyclic subgroup
$\langle a^{2^{j-1}}b\rangle$ of order $2^{n-j}$, whose unique cover is
likewise $\langle a^{2^{j-1}},b\rangle$.
The purely cyclic subgroup $\langle a^{2^j}\rangle$ at the same level is not
meet-irreducible, as it is covered by both $\langle a^{2^{j-1}}\rangle$ and
$\langle a^{2^j},b\rangle$.
All subgroups at a fixed level are normal, so each forms a singleton conjugacy
class; the $n-3$ levels together contribute $2(n-3)$ classes.

At the bottom level, $\langle a^{2^{n-2}}\rangle$ has three covers and is
not meet-irreducible.
The involutions $b$ and $a^{2^{n-2}}b$ each have
$\langle a^{2^{n-2}},b\rangle$ as their unique cover; since
$a\cdot\langle b\rangle\cdot a^{-1}=\langle a^{2^{n-2}}b\rangle$, they are
conjugate and form one conjugacy class, contributing a single class.

Summing over all families,
\[
  w(G) = 3 + 2(n-3) + 1 = 2n-2 = 2(n-1),
\]
as claimed.
\end{proof}

\section{Complexity of dihedral groups \texorpdfstring{$D_{p^n}$}{Dpn}}

The dihedral group $D_{p^n}$ presents a genuinely richer subgroup lattice
than the cyclic case $C_{p^n q}$.
While $\Sub(C_{p^n q})/C_{p^n q}$ is a grid, the quotient
$\Sub(D_{p^n})/D_{p^n}$ carries a conjugacy class of reflections
$d_0 = [\langle s\rangle]$ whose intersections with cyclic subgroups are
governed by the coprimality $\gcd(2,p)=1$.
This introduces a cross-restriction phenomenon
(Lemma~\ref{lem:anchor}) with no analogue in the $C_{p^n q}$ setting, and
its management requires a new bridge-exclusion argument
(Proposition~\ref{prop:bridge}).
Despite this additional complication in the proof, the final answer equals
that for $C_{p^n q}$.

\subsection{Setup and the rank function}

Fix an odd prime $p$ and an integer $n\ge 1$.  Set
\[
  G = D_{p^n} = \langle\, r, s \mid r^{p^n} = s^2 = e,\;
    srs^{-1} = r^{-1}\,\rangle.
\]
Every subgroup of $G$ belongs to one of two families:
\[
  C_{p^k} := \langle r^{p^{n-k}}\rangle,\quad 0\le k\le n,\qquad
  D_{p^k} := \langle r^{p^{n-k}},s\rangle,\quad 0\le k\le n,
\]
where $C_{p^0}=1$ and $D_{p^0}\cong C_2$ is generated by a reflection.
Since $p$ is odd, $C_{p^k}$ is the unique subgroup of order $p^k$ (hence
normal in $G$), and all copies of $D_{p^k}$ are conjugate in $G$.
We write $c_k$ and $d_k$ for the corresponding conjugacy classes.

The key arithmetic fact used throughout is:
\begin{equation}\label{eq:cop}
  \langle s\rangle \cap C_{p^k} = 1 \qquad\text{for all }k\ge 0,
\end{equation}
which holds because $|\langle s\rangle|=2$ and $|C_{p^k}|=p^k$ are coprime.

\begin{definition}\label{def:rank}
Define $P\colon\Sub(G)\to\{0,1,\dots,n+1\}$ by
\[
  P(C_{p^k})=k,\qquad P(D_{p^k})=k+1.
\]
This is the restriction to $\Sub(D_{p^n})$ of the rank map of
\cite[Definition~2.10]{ABB+25}.
\end{definition}

\begin{proposition}\label{prop:rankstrict}
$P$ is strictly order-preserving: $H\subsetneq K$ implies $P(H)<P(K)$.
\end{proposition}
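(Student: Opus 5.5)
The plan is a direct case analysis on the two families of subgroups listed above, using only orders and the containment structure of $\Sub(D_{p^n})$. Every subgroup of $G$ is some $C_{p^k}$ or some conjugate of $D_{p^k}$, and $P$ depends only on the conjugacy class, so it is enough to show that $H\subsetneq K$ forces $P(H)<P(K)$ in each of the four combinations of types.

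\emph{Case $H=C_{p^j}$, $K=C_{p^k}$.} Strict inclusion forces $p^j<p^k$, so $j<k$ and $P(H)=j<k=P(K)$.

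\emph{Case $H=C_{p^j}$, $K$ a conjugate of $D_{p^k}$.} Inclusion gives $p^j\mid 2p^k$; since $p$ is odd, $j\le k$. Hence $P(H)=j\le k<k+1=P(K)$.

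\emph{Case $H$ a conjugate of $D_{p^j}$, $K$ a conjugate of $D_{p^k}$.} Strict inclusion gives $2p^j<2p^k$, so $j<k$ and $P(H)=j+1<k+1=P(K)$.

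\emph{Case $H$ a conjugate of $D_{p^j}$, $K=C_{p^k}$.} This cannot occur. $H$ contains a reflection, whose order is $2$, while $C_{p^k}$ has odd order. Equivalently, by \eqref{eq:cop} (after conjugating, using that $C_{p^k}$ is normal), no reflection lies in any $C_{p^k}$.

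The only step needing any care is the second case. There one must use that the odd prime $p$ does not divide $2$, so that the $p$-part of the order of $D_{p^k}$ is exactly $p^k$. With that observation the cases are exhaustive, and the proposition follows.
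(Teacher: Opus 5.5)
Your proof is correct and follows the paper's argument, which is the same four-case analysis on the types of $H$ and $K$. The only cosmetic difference is in the last case: you rule out $D_{p^j}\le C_{p^k}$ uniformly, since a reflection of order $2$ cannot lie in a group of odd order. The paper instead splits this case into $j\ge 1$ (a non-abelian group cannot sit inside an abelian one) and $j=0$ (the order $2$ does not divide $p^k$).
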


\begin{proof}
We enumerate every possible type of proper inclusion.
\begin{itemize}
\item $C_{p^j}\subsetneq C_{p^k}$: occurs iff $j<k$, giving
  $P(C_{p^j})=j<k=P(C_{p^k})$.
\item $C_{p^j}\le D_{p^k}$: occurs iff $j\le k$; then
  $P(C_{p^j})=j\le k<k+1=P(D_{p^k})$.
\item $D_{p^j}\subsetneq D_{p^k}$: occurs iff $j<k$, giving
  $P(D_{p^j})=j+1<k+1=P(D_{p^k})$.
\item $D_{p^j}\le C_{p^k}$: impossible.  For $j\ge 1$, $D_{p^j}$ is
  non-abelian while $C_{p^k}$ is abelian.  For $j=0$, one would need
  $|D_{p^0}|=2$ to divide $|C_{p^k}|=p^k$, which fails since $p$ is odd.
  \qedhere
\end{itemize}
\end{proof}

By Proposition~\ref{prop:rankstrict}, $P$ extends to a well-defined map on
intervals: for $(H,K)\in I(\Sub(G))$ we write $P(H,K)=(P(H),P(K))$ and call
this the \emph{$P$-arc} of the arrow.

\begin{definition}\label{def:arrowtypes}
Non-trivial arrows in $I(\Sub(G))$, up to conjugacy, are classified as
follows:
\begin{itemize}
\item a \emph{$C$-arrow}: $(c_i,c_j)$ for $0\le i<j\le n$;
\item a \emph{$D$-arrow}: $(d_i,d_j)$ for $0\le i<j\le n$;
\item an \emph{$X$-arrow}: $(c_i,d_j)$ for $0\le i\le j\le n$
  (including vertical arrows $i=j$).
\end{itemize}
An arrow is \emph{left-anchored} if its source is $c_0$ or $d_0$.
For a minimal generating set $S$, we write $C$, $D$, $X$ for the counts of
$C$-, $D$-, and $X$-arrows in $S$ respectively.
\end{definition}

\begin{remark}
The three types of Definition~\ref{def:arrowtypes} cover all non-trivial
arrows, since Proposition~\ref{prop:rankstrict} rules out inclusions
$D_{p^j}\le C_{p^k}$.
\end{remark}

Figure~\ref{fig:arrowtypes} illustrates the three arrow types for $n=2$.

\begin{figure}[ht]
\centering
\begin{tikzpicture}[
  every node/.style = {mynode},
  scale = 1.1
]
\node (c0) at (0,0)   {$c_0$};
\node (c1) at (0,1.4) {$c_1$};
\node (c2) at (0,2.8) {$c_2$};
\node (d0) at (2,0)   {$d_0$};
\node (d1) at (2,1.4) {$d_1$};
\node (d2) at (2,2.8) {$d_2$};
\draw[hasseedge] (c0)--(c1); \draw[hasseedge] (c1)--(c2);
\draw[hasseedge] (d0)--(d1); \draw[hasseedge] (d1)--(d2);
\draw[hasseedge] (c0)--(d0); \draw[hasseedge] (c1)--(d1);
\draw[hasseedge] (c2)--(d2);
\draw[hasseedge] (c0) to[bend left=10] (d1);
\draw[hasseedge] (c1) to[bend left=10] (d2);
\draw[solidarrow, blue] (c0) to[bend left=35] (c2);
\draw[solidarrow, red]  (d0) to[bend right=35] (d2);
\draw[solidarrow, teal] (c0) to[bend left=10] (d1);
\end{tikzpicture}
\caption{The three arrow types of Definition~\ref{def:arrowtypes} in
$\Sub(D_{p^2})$.  The $C$-arrow $(c_0,c_2)$ is blue (left column), the
$D$-arrow $(d_0,d_2)$ is red (right column), and the $X$-arrow $(c_0,d_1)$
is teal (diagonal).  Thin grey lines are the Hasse edges.}
\label{fig:arrowtypes}
\end{figure}

\subsection{Upper bound}

\begin{lemma}[Left-anchoring constraints]\label{lem:anchor}
Let $S$ be a minimal generating set for $T\in\cT(D_{p^n})$.
\begin{enumerate}[(i)]
\item At most one left-anchored arrow of each type belongs to $S$.
\item If $(d_0,d_k)\in S$, then $(c_0,c_j)\notin S$ for all $j\le k$.
  The same conclusion holds with $(d_0,d_k)$ replaced by any $X$-arrow
  $(c_0,d_k)\in S$.
\end{enumerate}
\end{lemma}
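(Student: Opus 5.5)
The plan is to use the definition of minimality directly. A generating set $S$ of $T$ is minimal precisely when no arrow $x\in S$ lies in the transfer system $\langle S\setminus\{x\}\rangle$ generated by the remaining arrows. If some $x$ did lie there, then $S\setminus\{x\}$ would already generate $T$, contradicting minimality. So in each case I will take a pair of arrows in $S$ and show that one of them can be obtained from the other. The only tool needed is the restriction axiom~(iv), applied to a suitable subgroup $L\le H$, together with conjugation invariance~(v). Axiom~(v) lets me work with chosen representatives $\langle s\rangle$, $D_{p^k}=\langle r^{p^{n-k}},s\rangle$ and $C_{p^j}$ of each conjugacy class.

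For (i) I treat the three types separately, in each case with two left-anchored arrows of the same type in $S$ whose targets have indices $j<k$. I will show the arrow with the smaller target is redundant.
\begin{itemize}
\item $C$-arrows: restricting $1\to C_{p^k}$ along $L=C_{p^j}\le C_{p^k}$ gives $1\cap C_{p^j}=1\to C_{p^k}\cap C_{p^j}=C_{p^j}$, which is $(c_0,c_j)$.
\item $D$-arrows: I choose representatives $\langle s\rangle\to D_{p^k}$ and take $L=D_{p^j}=\langle r^{p^{n-j}},s\rangle$. This $L$ contains the same reflection $s$ and lies inside $D_{p^k}$. Restriction then gives $\langle s\rangle\to D_{p^j}$, which is $(d_0,d_j)$ up to conjugacy.
\item $X$-arrows: restricting $1\to D_{p^k}$ along $L=D_{p^j}$ gives $1\to D_{p^j}$, which is $(c_0,d_j)$.
\end{itemize}
In each case the smaller arrow lies in the transfer system generated by the larger one alone, contradicting minimality.

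For (ii), suppose $\langle s\rangle\to D_{p^k}$ is in $S$ and fix $j\le k$. I restrict along $L=C_{p^j}\le D_{p^k}$. By the coprimality fact \eqref{eq:cop}, $\langle s\rangle\cap C_{p^j}=1$, and $D_{p^k}\cap C_{p^j}=C_{p^j}$. Hence $1\to C_{p^j}$ lies in the transfer system generated by $(d_0,d_k)$, so $(c_0,c_j)$ cannot also belong to the minimal set $S$. The $X$-arrow case $1\to D_{p^k}$ is identical and even easier: restricting to $C_{p^j}$ gives $1\to C_{p^j}$ directly, with no coprimality needed. The case $j=k$ is allowed here because $C_{p^k}\le D_{p^k}$.

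I do not expect a real obstacle. The one point needing care is the $D$-arrow case in (i), where the restricting subgroup $L$ must be the copy of $D_{p^j}$ that contains the specific reflection $s$. Otherwise the intersection with $\langle s\rangle$ would collapse to $1$ and would produce an $X$-arrow instead of the desired $D$-arrow.
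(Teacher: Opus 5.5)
Your proof is correct and takes the same approach as the paper: each redundancy comes from a single application of the restriction axiom. In (i) you restrict along $C_{p^j}$ for $C$-arrows and along $D_{p^j}$ for $D$- and $X$-arrows, and in (ii) along $C_{p^j}$ using \eqref{eq:cop}. Your explicit choice of the copy of $D_{p^j}$ that contains $s$ in the $D$-arrow case supplies the one detail the paper leaves implicit when it calls the other two types ``identical.''
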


\begin{proof}
(i) We treat $C$-arrows; the other two types are identical.
Suppose $(c_0,c_j),(c_0,c_l)\in S$ with $j<l$.
Apply the restriction axiom to $(c_0,c_l)$ with $L=C_{p^j}\le C_{p^l}$:
\[
  (c_0\cap c_j,\; c_j) = (1\cap C_{p^j},\,C_{p^j}) = (c_0,c_j)\in\langle S\rangle.
\]
Hence $(c_0,c_j)\in\langle S\setminus\{(c_0,c_j)\}\rangle$, contradicting
minimality.

(ii) Suppose $(d_0,d_k)\in S$ and fix $j\le k$.
Since $C_{p^j}\le D_{p^k}$, apply the restriction axiom to $(d_0,d_k)$
with $L=C_{p^j}$:
\[
  (d_0\cap c_j,\;c_j)
  = \bigl(\langle s\rangle\cap C_{p^j},\,C_{p^j}\bigr)
  \stackrel{\eqref{eq:cop}}{=}
  (1,C_{p^j}) = (c_0,c_j)\in\langle S\rangle.
\]
Hence $(c_0,c_j)\notin S$ by minimality.
The case $(c_0,d_k)\in S$ is identical.
\end{proof}

Figure~\ref{fig:anchor} illustrates both parts of Lemma~\ref{lem:anchor}
for $n=2$.

\begin{figure}[ht]
\centering
\begin{tabular}{cccc}
\begin{tikzpicture}[
    every node/.style={mynode, minimum size=13pt, font=\scriptsize},
    scale=0.82]
  \node(c0)at(0,0){$c_0$};\node(c1)at(1.1,0){$c_1$};\node(c2)at(2.2,0){$c_2$};
  \node(d0)at(0,1.4){$d_0$};\node(d1)at(1.1,1.4){$d_1$};\node(d2)at(2.2,1.4){$d_2$};
  \draw[hasseedge](c0)--(c1);\draw[hasseedge](c1)--(c2);
  \draw[hasseedge](d0)--(d1);\draw[hasseedge](d1)--(d2);
  \draw[hasseedge](c0)--(d0);\draw[hasseedge](c1)--(d1);\draw[hasseedge](c2)--(d2);
  \draw[hasseedge](c0)to[bend left=12](d1);\draw[hasseedge](c1)to[bend left=12](d2);
  \draw[solidarrow,blue](c0)to[bend left=40](c2);
  \draw[dashedarrow,blue!60](c0)to[bend right=30](c1);
  \node[draw=none,fill=none,font=\small] at(1.1,-0.85){(i) C-type};
\end{tikzpicture}
&
\begin{tikzpicture}[
    every node/.style={mynode, minimum size=13pt, font=\scriptsize},
    scale=0.82]
  \node(c0)at(0,0){$c_0$};\node(c1)at(1.1,0){$c_1$};\node(c2)at(2.2,0){$c_2$};
  \node(d0)at(0,1.4){$d_0$};\node(d1)at(1.1,1.4){$d_1$};\node(d2)at(2.2,1.4){$d_2$};
  \draw[hasseedge](c0)--(c1);\draw[hasseedge](c1)--(c2);
  \draw[hasseedge](d0)--(d1);\draw[hasseedge](d1)--(d2);
  \draw[hasseedge](c0)--(d0);\draw[hasseedge](c1)--(d1);\draw[hasseedge](c2)--(d2);
  \draw[hasseedge](c0)to[bend left=12](d1);\draw[hasseedge](c1)to[bend left=12](d2);
  \draw[solidarrow,red](d0)to[bend right=40](d2);
  \draw[dashedarrow,red!60](d0)to[bend left=30](d1);
  \node[draw=none,fill=none,font=\small] at(1.1,-0.85){(ii) D-type};
\end{tikzpicture}
&
\begin{tikzpicture}[
    every node/.style={mynode, minimum size=13pt, font=\scriptsize},
    scale=0.82]
  \node(c0)at(0,0){$c_0$};\node(c1)at(1.1,0){$c_1$};\node(c2)at(2.2,0){$c_2$};
  \node(d0)at(0,1.4){$d_0$};\node(d1)at(1.1,1.4){$d_1$};\node(d2)at(2.2,1.4){$d_2$};
  \draw[hasseedge](c0)--(c1);\draw[hasseedge](c1)--(c2);
  \draw[hasseedge](d0)--(d1);\draw[hasseedge](d1)--(d2);
  \draw[hasseedge](c0)--(d0);\draw[hasseedge](c1)--(d1);\draw[hasseedge](c2)--(d2);
  \draw[hasseedge](c0)to[bend left=12](d1);\draw[hasseedge](c1)to[bend left=12](d2);
  \draw[solidarrow,teal](c0)to[bend left=15](d2);
  \draw[dashedarrow,teal!70](c0)to[bend left=8](d1);
  \node[draw=none,fill=none,font=\small] at(1.1,-0.85){(iii) X-type};
\end{tikzpicture}
&
\begin{tikzpicture}[
    every node/.style={mynode, minimum size=13pt, font=\scriptsize},
    scale=0.82]
  \node(c0)at(0,0){$c_0$};\node(c1)at(1.1,0){$c_1$};\node(c2)at(2.2,0){$c_2$};
  \node(d0)at(0,1.4){$d_0$};\node(d1)at(1.1,1.4){$d_1$};\node(d2)at(2.2,1.4){$d_2$};
  \draw[hasseedge](c0)--(c1);\draw[hasseedge](c1)--(c2);
  \draw[hasseedge](d0)--(d1);\draw[hasseedge](d1)--(d2);
  \draw[hasseedge](c0)--(d0);\draw[hasseedge](c1)--(d1);\draw[hasseedge](c2)--(d2);
  \draw[hasseedge](c0)to[bend left=12](d1);\draw[hasseedge](c1)to[bend left=12](d2);
  \draw[solidarrow,red](d0)to[bend right=40](d2);
  \draw[crossarrow](c0)to[bend right=30](c1);
  \draw[crossarrow](c0)to[bend left=40](c2);
  \node[draw=none,fill=none,font=\small] at(1.1,-0.85){(iv) Cross (new)};
\end{tikzpicture}
\end{tabular}
\smallskip

\noindent\footnotesize
\textbf{solid}: arrow in $S$ \quad
\textbf{- - -}: generated by restriction \quad
{\color{red!70!black}\textbf{- - -}}: cross-type forced (Lemma~\ref{lem:anchor}(ii))
\caption{Panels~(i)--(iii): in each type, the longer left-anchored solid
arrow generates the shorter dashed one by restriction, proving
Lemma~\ref{lem:anchor}(i).
Panel~(iv): the solid $D$-arrow $(d_0,d_2)$ forces both $C$-arrows
$(c_0,c_1)$ and $(c_0,c_2)$ (red dashed) into $\langle S\rangle$
via~\eqref{eq:cop}, proving Lemma~\ref{lem:anchor}(ii).
This cross-type phenomenon has no analogue in the $C_{p^n q}$ setting.}
\label{fig:anchor}
\end{figure}

\begin{lemma}[Inductive splitting]\label{lem:split}
Let $U\subseteq S$ be the arrows of any two specified types in $S$, and set
$U'=\{a\in U : a\text{ is not left-anchored}\}$.
Then $U'$ is a minimal generating set for $\langle U'\rangle$, viewed as a
transfer system on the sub-lattice $\Sub(G)_{\ge 1}\cong\Sub(D_{p^{n-1}})$.
\end{lemma}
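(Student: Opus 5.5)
The approach is to reduce everything to the minimality of $S$ in $\Sub(G)$, after identifying $\Sub(G)_{\ge 1}$ with the interval of subgroups containing $C_p$. Concretely, I read $\Sub(G)_{\ge 1}$ as the set of subgroups $C_{p^k}$ and $D_{p^k}$ with $k\ge 1$, i.e.\ the subgroups containing $C_p=C_{p^1}$. Since $C_p$ is normal, the correspondence theorem gives a lattice isomorphism $\Sub(G)_{\ge 1}\cong\Sub(G/C_p)\cong\Sub(D_{p^{n-1}})$. It sends $C_{p^k}\mapsto C_{p^{k-1}}$ and $D_{p^k}\mapsto D_{p^{k-1}}$, so in terms of conjugacy classes it shifts $c_k\mapsto c_{k-1}$ and $d_k\mapsto d_{k-1}$. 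The isomorphism respects intersections, because the interval is closed under $\cap$ in $\Sub(G)$. It also respects conjugation, because $G$ acts on the interval through $G/C_p$. So the transfer-system axioms (i)--(v) on the interval, using only test subgroups $L$ from the interval, are exactly the $D_{p^{n-1}}$-transfer-system axioms. Every non-left-anchored arrow has both endpoints in the interval, so $U'$ makes sense there; I take $\langle U'\rangle$ to mean its closure inside the interval.

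The key step is that minimality passes from $S$ to $U'$. First I note a general fact: any subset $S'\subseteq S$ is a minimal generating set of the $G$-transfer system it generates. Indeed, if $a\in S'$ lay in the $G$-closure of $S'\setminus\{a\}$, it would lie in $\langle S\setminus\{a\}\rangle$, contradicting minimality of $S$. Apply this with $S'=U'$. Now suppose $a\in U'$ lay in the interval-closure of $U'\setminus\{a\}$. Every derivation step inside the interval (transitivity, restriction along some $L$ in the interval, conjugation) is also a legitimate derivation step in $\Sub(G)$. Hence $a$ would lie in the $G$-closure of $U'\setminus\{a\}$, which is a contradiction. So $U'$ is irredundant in the interval. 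It generates $\langle U'\rangle$ by definition. Nothing here uses which two types make up $U$, which is why the lemma is stated for any two types.

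The only genuine obstacle is consistency: the $D_{p^{n-1}}$-transfer system generated by $U'$ should agree with the $G$-closure of $U'$ intersected with $\Sub(G)_{\ge 1}$. Otherwise the later inductive count could be distorted. I would check that restricting along a subgroup $L$ outside the interval produces nothing new inside it. Such an $L$ is $1$ or a conjugate of $\langle s\rangle$, so $H\cap L$ has order at most $2$ and is coprime to $p$. The resulting arrow $(K\cap L,H\cap L)$ therefore has its target, and hence its source, outside the interval. Arrows built by transitivity from arrows with endpoints in the interval also stay in the interval. By induction on derivation length, the part of the $G$-closure lying in the interval is therefore generated using interval operations only. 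Combined with the irredundancy argument above, this gives the lemma.
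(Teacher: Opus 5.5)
Your proposal is correct and follows the paper's argument: you identify $\Sub(G)_{\ge 1}$ with $\Sub(D_{p^{n-1}})$ (you via the correspondence theorem for $C_p\unlhd G$, the paper via the shift $c_k\mapsto c_{k+1}$, $d_k\mapsto d_{k+1}$), observe that every derivation step inside the interval is also a derivation step in $\Sub(G)$, and conclude that a redundancy of $a$ in $U'$ would put $a\in\langle S\setminus\{a\}\rangle$, contradicting minimality of $S$; you also make explicit the containment of the interval-closure in the $G$-closure, which the paper uses only implicitly. Your final consistency check is not needed for the lemma as stated, and if you keep it, argue the transitivity step backwards (if the composite of $L\to K$ and $K\to H$ has $L\supseteq C_p$, then $K\supseteq C_p$; likewise a restriction $(K\cap L, H\cap L)$ landing in the interval forces both $K$ and $L$ to contain $C_p$), because forward closure of the interval alone does not rule out derivations that leave the interval and re-enter it.
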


\begin{proof}
The map $\iota\colon\Sub(D_{p^{n-1}})\to\Sub(D_{p^n})$ defined by
$c_k\mapsto c_{k+1}$ and $d_k\mapsto d_{k+1}$ is a lattice embedding
preserving all inclusion and conjugacy relations.
Under $\iota$, the arrows of $U'$ biject with arrows of the corresponding
types in $\Sub(D_{p^{n-1}})$.
Suppose for contradiction that some $a\in U'$ satisfies
$a\in\langle U'\setminus\{a\}\rangle$.
Since $U'\subseteq S$, we have
$\langle U'\setminus\{a\}\rangle\subseteq\langle S\setminus\{a\}\rangle$,
so $a\in\langle S\setminus\{a\}\rangle$, contradicting minimality of $S$.
\end{proof}

\begin{proposition}[Bridge-exclusion bounds]\label{prop:bridge}
Let $S$ be a minimal generating set for $T\in\cT(D_{p^n})$.  Then:
\begin{enumerate}[(1)]
\item $C+D\le n$, with equality implying
  $(c_0,c_n),(d_0,d_n)\in\langle S\rangle$.
\item $C+X\le n+1$, with equality implying $(c_0,d_n)\in\langle S\rangle$.
\item $D+X\le n+1$, with equality implying $(d_0,d_n)\in\langle S\rangle$.
\end{enumerate}
\end{proposition}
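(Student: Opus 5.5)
We argue by induction on $n$, in the style of the $C_{p^nq}$ argument of \cite{ABB+25}, using Lemma~\ref{lem:anchor} and Lemma~\ref{lem:split}. All three parts have the same shape. Fix two types, say $C$ and $D$ for part~(1), and let $U\subseteq S$ be the arrows of those types. Split $U=U_0\sqcup U'$, where $U_0$ consists of the left-anchored arrows and $U'$ of the rest. By Lemma~\ref{lem:split}, $U'$ is a minimal generating set on $\Sub(G)_{\ge1}\cong\Sub(D_{p^{n-1}})$. The image of $U'$ consists only of arrows of the same two types, so the inductive hypothesis applies and gives $|U'|\le n-1$ in case~(1) and $|U'|\le n$ in cases~(2) and~(3). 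By Lemma~\ref{lem:anchor}(i), $|U_0|\le 2$. Part~(ii) of that lemma cuts this further: in case~(1), a pair $(d_0,d_k),(c_0,c_j)$ can coexist in $S$ only when $j>k$, and in case~(2) the pair $(c_0,d_k),(c_0,c_j)$ only when $j>k$.

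The base case $n=1$ is a direct check on the six-element lattice $\Sub(D_p)$. For example, in case~(1) the only candidates are $(c_0,c_1)$ and $(d_0,d_1)$, and Lemma~\ref{lem:anchor}(ii) forbids having both.

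The key step is the bridge exclusion, which handles the situation where the naive count $|U_0|+|U'|$ exceeds the bound. This can only happen when $|U_0|=2$ and $U'$ attains the inductive bound. In that case the inductive equality clause says that the relevant long arrows of $\Sub(D_{p^{n-1}})$ lie in $\langle U'\rangle$. Under $\iota$ these are $(c_1,c_n)$ and $(d_1,d_n)$ in case~(1), $(c_1,d_n)$ in case~(2), and $(d_1,d_n)$ in case~(3). Take case~(1), with $(d_0,d_k)$ and $(c_0,c_j)$ in $S$ and $j>k$.
\begin{itemize}
\item Restricting $(d_0,d_k)$ along $C_{p^k}\le D_{p^k}$ gives $(c_0,c_k)$.
\item Restricting $(c_1,c_n)$ to $C_{p^j}$ gives $(c_1,c_j)$, by the same intersection computation as in Lemma~\ref{lem:anchor}(i).
\item The arrow $(c_0,c_1)$ is obtained by restricting $(c_0,c_k)$ when $k\ge1$.
\item Composing, $(c_0,c_1)$ followed by $(c_1,c_j)$ gives $(c_0,c_j)$ by transitivity.
\end{itemize}
Hence $(c_0,c_j)\in\langle S\setminus\{(c_0,c_j)\}\rangle$, contradicting minimality. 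When $k=0$, so that the arrow is $(d_0,d_0)$ and trivial, we instead use the $D$-side bridge $(d_1,d_n)$ together with $(d_0,d_k)$ to manufacture a contradiction. Cases~(2) and~(3) run in the same way: the anchored arrows of the two types are concatenated with the bridge $\iota$(long arrow) and then restricted, which forces one of the anchored generators to be redundant. Doing this brings the count down to $n$ in case~(1) and to $n+1$ in cases~(2) and~(3).

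For the equality clauses, equality forces either that $U'$ attains its bound with one anchored arrow, or that the bound is achieved in a degenerate way we can list. In the first situation, the long arrow supplied by induction is composed with the anchored arrow, restricted if needed, to give an arrow from the bottom. For example, $(c_0,c_1)$ followed by $(c_1,c_n)$ gives $(c_0,c_n)$. In case~(1), $(d_0,d_n)$ arises because an anchored $D$-arrow restricts to $(d_0,d_1)$, which then composes with $(d_1,d_n)$. An anchored $C$-arrow alone does not produce it, so here we must show that equality really forces the $D$-side anchor.

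The main obstacle I expect is this bookkeeping of which anchored arrow survives at equality, especially in case~(1), where both bottom arrows must be produced. There are also the boundary cases $k=0$ and vertical $X$-arrows $(c_i,d_i)$. For these, restriction to cyclic subgroups, via \eqref{eq:cop}, is the only tool linking the two columns, so the case analysis has to be done carefully rather than by symmetry.
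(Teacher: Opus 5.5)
\textbf{Route for (1).} Your bound step for (1) differs from the paper's. The paper proves the inequality without the equality clause. With $(d_0,d_k)$ and $(c_0,c_l)$ both anchored, it rules out arrows of $U'$ bridging from level $\le k$ to level $\ge l$, and bounds the two resulting blocks by induction. You instead exclude $|U_0|=2$, $|U'|=n-1$ directly: you take $(c_1,c_n)\in\langle U'\rangle$ from the inductive equality clause and compose $(c_0,c_1)$ with $(c_1,c_j)$. Given that clause, this is sound and shorter. Your separate case $k=0$ is vacuous, since a $D$-arrow $(d_0,d_k)$ has $k\ge 1$.

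\textbf{The gap.} Your route makes the equality clauses load-bearing, and you never prove them. The configuration where the extremal count is reached with $|U_0|=2$ and $|U'|$ one below its bound (your ``degenerate way we can list'') is exactly where induction says nothing about $\langle U'\rangle$. Parts (2) and (3) are only asserted to ``run in the same way''.

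More seriously, the clauses as stated are false already at your base case $n=1$ (for (1), the $(d_0,d_n)$ half), so the induction cannot be closed as you set it up. In $D_p$:
\begin{enumerate}[(a)]
\item $S=\{(c_0,c_1)\}$ is minimal with $C+D=1$, but $(d_0,d_1)\notin\langle S\rangle$. So the ``$D$-side anchor'' you hoped to force need not exist.
\item $S=\{(c_0,d_0),(c_0,c_1)\}$ is minimal with $C+X=2$, but $(c_0,d_1)\notin\langle S\rangle$.
\item $S=\{(c_0,d_1),(c_1,d_1)\}$ is minimal with $D+X=2$, but $(d_0,d_1)\notin\langle S\rangle$, because $X$-arrows never generate arrows with dihedral source.
\end{enumerate}
Transported by $\iota$, examples (b) and (c) show that $U'$ can attain its inductive bound while $(c_1,d_n)$, resp.\ $(d_1,d_n)$, is not in $\langle U'\rangle$. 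Those are exactly the arrows your sketches of (2) and (3) rely on. The paper's own justification of the clauses fails on the same examples, so it cannot be borrowed.

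\textbf{A possible repair.} Your bound steps only need weaker clauses: $(c_0,c_n)\in\langle S\rangle$ in (1) and (2), and $(c_0,d_n)\in\langle S\rangle$ in (3).

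For (1), a reach argument proves this. Suppose $M<n$ is maximal with $(c_0,c_M)\in\langle S\rangle$. Then no arrow of $U$ runs from an index $a\le M$ to an index $b>M$: restricting it to $C_{p^b}$ and precomposing with $(c_0,c_a)$ would give $(c_0,c_b)$. Hence $U$ splits into minimal sets on $\Sub(D_{p^M})$ and on $[C_{p^{M+1}},G]\cong\Sub(D_{p^{n-M-1}})$, and induction gives $|U|\le n-1$.

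In (2) and (3) the same splitting only gives $n+1$. You must also use that every $D$-arrow, and every $X$-arrow with source index $a\ge 1$, restricts (along a suitable dihedral subgroup) to the vertical arrows $(c_i,d_i)$ for $i\le a$. For example, suppose $|U'|=n$ in (2). Then $U'$ must contain an $X$-arrow, since its $C$-arrows have distinct sources, and that $X$-arrow restricts to $(c_0,d_0)$. This is how the anchored pair $(c_0,d_0),(c_0,c_j)$ has to be eliminated; your concatenation cannot produce $(c_0,c_1)$ from $(c_0,d_0)$.
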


\begin{proof}
All three bounds are proved by induction on $n$.
In each part we let $U$ denote the arrows of the relevant pair of types in
$S$, set $U_0=\{a\in U: a\text{ is left-anchored}\}$, and put $U'=U\setminus
U_0$.
By Lemma~\ref{lem:split} and the inductive hypothesis, $|U'|$ is bounded by
the corresponding statement for $D_{p^{n-1}}$.

\smallskip
\noindent\emph{Base cases.}
When $n=0$, $G=D_1\cong C_2$ has the single non-trivial arrow $(c_0,d_0)$,
which is an $X$-arrow.
Bounds~(2) and~(3) hold trivially.
For~(1) at $n=1$: the only $C$-arrow is $(c_0,c_1)$ and the only $D$-arrow
is $(d_0,d_1)$.
By Lemma~\ref{lem:anchor}(ii) with $k=j=1$, if $(d_0,d_1)\in S$ then
$(c_0,c_1)\notin S$, giving $C+D\le 1=n$.

\smallskip
\noindent\emph{Proof of~(1): $C+D\le n$.}
Let $U$ be the $C$- and $D$-arrows of $S$.
By induction $|U'|\le n-1$.

\emph{Case~1:} $U_0$ contains no $D$-arrow.
By Lemma~\ref{lem:anchor}(i), $|U_0|\le 1$, so $C+D\le n$.

\emph{Case~2:} $U_0$ contains a $D$-arrow $(d_0,d_k)$ for some $1\le k\le n$.
By Lemma~\ref{lem:anchor}(ii), any $C$-arrow in $U_0$ must be $(c_0,c_l)$
with $l>k$, and by Lemma~\ref{lem:anchor}(i) there is at most one such
$C$-arrow.

\quad\emph{Sub-case~2a:} $U_0$ contains no $C$-arrow.
Then $|U_0|=1$ and $C+D\le n$ by $|U'|\le n-1$.

\quad\emph{Sub-case~2b:} $U_0$ contains exactly one $C$-arrow $(c_0,c_l)$
with $l>k$.
Then $|U_0|=2$.
We claim $|U'|\le n-2$, giving $C+D\le n$.
Define
\[
  U'_A := \{a\in U': \text{both endpoints of }a\text{ have }P\text{-value}\le k\},
\]
\[
  U'_B := \{a\in U': \text{both endpoints of }a\text{ have }P\text{-value}\ge l\}.
\]
We show $U'=U'_A\sqcup U'_B$ by ruling out bridges crossing the gap $(k,l)$.

\emph{No $D$-bridge.}
Suppose $(d_a,d_b)\in U'$ with $1\le a\le k$ and $b\ge l$.
Since $d_0=[\langle s\rangle]\le D_{p^a}$ for all $a$, restricting
$(d_0,d_k)$ to $d_a\le d_k$ gives $(d_0,d_a)\in\langle S\rangle$.
The composition axiom applied to $(d_0,d_a)$ and $(d_a,d_b)$ yields
$(d_0,d_b)\in\langle S\rangle$.
Restricting to $C_{p^l}\le D_{p^b}$ (valid since $l\le b$) and
applying~\eqref{eq:cop}:
\[
  \bigl(\langle s\rangle\cap C_{p^l},\,C_{p^l}\bigr)
  \stackrel{\eqref{eq:cop}}{=}
  (1,C_{p^l}) = (c_0,c_l)\in\langle S\rangle,
\]
contradicting $(c_0,c_l)\in S$.

\emph{No $C$-bridge.}
Suppose $(c_a,c_b)\in U'$ with $1\le a\le k$ and $b\ge l$.
Restricting $(d_0,d_k)$ to $C_{p^a}\le D_{p^k}$ and applying~\eqref{eq:cop}
gives $(c_0,c_a)\in\langle S\rangle$.
Composing with $(c_a,c_b)$ yields $(c_0,c_b)\in\langle S\rangle$.
Restricting to $C_{p^l}\le C_{p^b}$ gives $(c_0,c_l)\in\langle S\rangle$,
contradiction.

Thus $U'=U'_A\sqcup U'_B$.
Applying Lemma~\ref{lem:split} and induction to each block:
\[
  |U'| = |U'_A|+|U'_B| \le (k-1)+(n-l) \le (k-1)+(n-k-1) = n-2,
\]
where the last inequality uses $l\ge k+1$.

\emph{Equality.}
When $C+D=n$, the inductive hypothesis furnishes
$(c_1,c_n),(d_1,d_n)\in\langle U'\rangle$; composing with the left-anchored
arrows in $U_0$ yields $(c_0,c_n),(d_0,d_n)\in\langle S\rangle$.

\smallskip
\noindent\emph{Proof of~(2): $C+X\le n+1$.}
Let $U$ be the $C$- and $X$-arrows of $S$.
By induction $|U'|\le n$.
If $|U_0|\le 1$ then $C+X\le n+1$.
Suppose $|U_0|=2$: by Lemma~\ref{lem:anchor}(ii) the two left-anchored
arrows are an $X$-arrow $(c_0,d_k)$ and a $C$-arrow $(c_0,c_l)$ with $l>k$.
We eliminate bridges.
Suppose $(c_a,d_b)\in U'$ with $a\le k$ and $b\ge l$.
Restricting $(c_0,d_k)$ to $C_{p^a}\le D_{p^k}$ gives
$(c_0,c_a)\in\langle S\rangle$.
Composing with $(c_a,d_b)$ yields $(c_0,d_b)\in\langle S\rangle$.
Restricting to $C_{p^l}\le D_{p^b}$ and applying~\eqref{eq:cop} gives
$(c_0,c_l)\in\langle S\rangle$, contradiction.
The same argument rules out $C$-type bridges.
Hence $U'=U'_A\sqcup U'_B$ and $|U'|\le n-2$, giving $C+X\le n$.
\emph{Equality}: $C+X=n+1$ forces $|U_0|=1$ and $|U'|=n$; the inductive
hypothesis and composition yield $(c_0,d_n)\in\langle S\rangle$.

\smallskip
\noindent\emph{Proof of~(3): $D+X\le n+1$.}
Let $U$ be the $D$- and $X$-arrows of $S$.
By induction $|U'|\le n$.
A left-anchored $D$-arrow $(d_0,d_k)$ and a left-anchored $X$-arrow
$(c_0,d_m)$ do not make each other redundant: the transfer system generated
by $(c_0,d_m)$ alone consists of $X$- and $C$-type arrows only, so
$(d_0,d_k)\notin\langle\{(c_0,d_m)\}\rangle$; and restriction of $(d_0,d_k)$
produces only $D$- or $C$-type arrows, never an $X$-arrow starting at $c_0$,
so $(c_0,d_m)\notin\langle\{(d_0,d_k)\}\rangle$.
Hence $|U_0|\le 2$.
If $|U_0|\le 1$ we are done.
Suppose $|U_0|=2$ with $(d_0,d_k),(c_0,d_m)\in S$ and $k\le m$.
We eliminate bridges.

\emph{No $X$-bridge.}
Suppose $(c_a,d_b)\in U'$ with $a\le k$ and $b\ge m$.
Restricting $(d_0,d_k)$ to $C_{p^a}\le D_{p^k}$ and applying~\eqref{eq:cop}
gives $(c_0,c_a)\in\langle S\rangle$.
Composing with $(c_a,d_b)$ yields $(c_0,d_b)\in\langle S\rangle$.
Restricting to $D_{p^m}\le D_{p^b}$ and using~\eqref{eq:cop}:
\[
  (C_{p^0}\cap D_{p^m},\,D_{p^m})
  \stackrel{\eqref{eq:cop}}{=}
  (1,D_{p^m}) = (c_0,d_m)\in\langle S\rangle,
\]
contradicting minimality.

\emph{No $D$-bridge.}
Suppose $(d_a,d_b)\in U'$ with $1\le a\le k$ and $b\ge m$.
Restricting $(c_0,d_m)$ to $D_{p^a}\le D_{p^m}$ gives
$(c_0,d_a)\in\langle S\rangle$.
Composing with $(d_a,d_b)$ yields $(c_0,d_b)\in\langle S\rangle$.
Restricting to $D_{p^m}\le D_{p^b}$:
\[
  (C_{p^0}\cap D_{p^m},\,D_{p^m})
  \stackrel{\eqref{eq:cop}}{=}
  (1,D_{p^m}) = (c_0,d_m)\in\langle S\rangle,
\]
contradicting $(c_0,d_m)\in S$.

Thus $U'=U'_A\sqcup U'_B$ and
\[
  |U'| \le (k-1)+(n-m) \le n-1\quad(m\ge k),
\]
giving $D+X\le n+1$.
\end{proof}

Figure~\ref{fig:bridge} illustrates the bridge-exclusion argument from
Sub-case~2b of the proof of Proposition~\ref{prop:bridge}(1).

\begin{figure}[ht]
\centering
\begin{tikzpicture}[
  every node/.style={mynode},
  scale=1.05
]
\node (c0) at (0,0)   {$c_0$};
\node (ca) at (2,0)   {$c_a$};
\node (ck) at (4,0)   {$c_k$};
\node (cl) at (6,0)   {$c_l$};
\node (d0) at (0,2)   {$d_0$};
\node (da) at (2,2)   {$d_a$};
\node (dk) at (4,2)   {$d_k$};
\node (db) at (8,2)   {$d_b$};
\draw[hasseedge] (c0)--(ca);
\draw[hasseedge] (ca)--(ck);
\draw[hasseedge] (ck)--(cl);
\draw[hasseedge] (d0)--(da);
\draw[hasseedge] (da)--(dk);
\draw[hasseedge] (c0)--(d0);
\draw[hasseedge] (ca)--(da);
\draw[hasseedge] (ck)--(dk);
\draw[solidarrow, red]
  (d0) to[bend left=18]
  node[draw=none,fill=none,above,font=\tiny,yshift=2pt]
    {$(d_0,d_k)\in U_0$}
  (dk);
\draw[solidarrow, blue]
  (c0) to[bend right=18]
  node[draw=none,fill=none,below,font=\tiny,yshift=-2pt]
    {$(c_0,c_l)\in U_0$}
  (cl);
\draw[crossarrow]
  (da) to[bend left=22]
  node[draw=none,fill=none,above,font=\tiny,yshift=2pt]
    {$(d_a,d_b)\in U'$}
  (db);
\node[draw=none,fill=none,font=\scriptsize,gray] at (2,-1.1) {$U'_A$};
\node[draw=none,fill=none,font=\scriptsize,gray] at (7,-1.1) {$U'_B$};
\node[draw=none,fill=none,font=\scriptsize,gray] at (-1.3,-1.1)
    {$P\text{-value}$:};
\foreach \xpos/\val in {0/0, 2/a, 4/k, 6/l, 8/b}{
  \node[draw=none,fill=none,font=\scriptsize,gray] at (\xpos,-1.1) {$\val$};
}
\node[draw=none,fill=none,font=\small,red!70!black] at (4,-2.0)
    {$\Rightarrow (c_0,c_l)\in\langle S\rangle$ \;[\,contradiction\,$\bot$\,]};
\end{tikzpicture}
\caption{Bridge-exclusion argument of Sub-case~2b in
Proposition~\ref{prop:bridge}(1), for $D$-type bridges.
Both $(d_0,d_k)$ and $(c_0,c_l)$ lie in $U_0$ (solid), with $l>k$.
A hypothetical bridge $(d_a,d_b)\in U'$ (red dashed) leads to a
contradiction via three steps:
(i)~restrict $(d_0,d_k)$ to $d_a\le d_k$, giving $(d_0,d_a)\in\langle S\rangle$;
(ii)~compose with the bridge to obtain $(d_0,d_b)\in\langle S\rangle$;
(iii)~restrict to $c_l\le d_b$ and apply~\eqref{eq:cop} to derive
$(c_0,c_l)\in\langle S\rangle$ ($\bot$).
Since $(c_0,c_l)\in S$, this contradicts minimality.
Hence $U'$ splits as $U'_A\sqcup U'_B$ with no bridge crossing the gap
$(k,l)$.}
\label{fig:bridge}
\end{figure}

\begin{corollary}\label{cor:upper}
For any minimal generating set $S$ of $T\in\cT(D_{p^n})$,
\[
  |S| \le \left\lfloor\frac{3n}{2}\right\rfloor + 1.
\]
\end{corollary}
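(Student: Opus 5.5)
The corollary should follow by adding the three pairwise bounds of Proposition~\ref{prop:bridge}. Every non-trivial arrow is a $C$-, $D$- or $X$-arrow (Definition~\ref{def:arrowtypes} and the subsequent remark), so $|S|=C+D+X$. Summing (1)--(3) gives
\[
  2(C+D+X)\le n+(n+1)+(n+1)=3n+2,
\]
so $|S|\le \tfrac{3n}{2}+1$. Since $|S|$ is an integer, $|S|\le\lfloor 3n/2+1\rfloor=\lfloor 3n/2\rfloor+1$. For $n$ even this is exactly the claimed bound, since $3n+2$ is even.

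For $n$ odd, $3n+2$ is odd, so the integrality of $2|S|$ forces $2|S|\le 3n+1$. This gives $|S|\le (3n+1)/2=\lfloor 3n/2\rfloor+1$, which is again the claim. So no parity refinement is needed: in both cases the floor is absorbed automatically, and the equality clauses of Proposition~\ref{prop:bridge} are not used.

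The only point I would double-check is that each bound in Proposition~\ref{prop:bridge} applies to an arbitrary minimal generating set $S$, and that the counts $C,D,X$ there are the same counts taken in $S$. Both hold by the statement of that proposition. The small cases ($n=0,1$) are covered because the proposition holds for all $n\ge 1$; for instance $n=1$ gives $2|S|\le 5$, so $|S|\le 2=\lfloor 3/2\rfloor+1$. If I had doubts about the proposition itself, the weakest link would be bound~(2), whose proof in fact yields the stronger $C+X\le n$ when two left-anchored arrows occur. The corollary only needs the stated $n+1$, though, so I would simply invoke the proposition as given.
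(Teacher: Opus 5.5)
Your argument is correct and matches the paper's proof exactly: write $|S|=C+D+X$, add the three bounds of Proposition~\ref{prop:bridge} to get $2|S|\le 3n+2$, and use that $|S|$ is an integer. Your separate even/odd check just unpacks the identity $\lfloor (3n+2)/2\rfloor=\lfloor 3n/2\rfloor+1$, which the paper uses in a single line.
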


\begin{proof}
Since $|S|=C+D+X$, adding the three bounds:
\[
  2|S| = (C+D)+(C+X)+(D+X) \le n+(n+1)+(n+1) = 3n+2.
\]
As $|S|$ is an integer, $|S|\le\lfloor(3n+2)/2\rfloor=\lfloor 3n/2\rfloor+1$.
\end{proof}

\subsection{Lower bound via a partial rainbow}

We construct a partial rainbow on $\Sub(D_{p^n})$ achieving the upper bound
of Corollary~\ref{cor:upper}.

\begin{lemma}\label{lem:alpha}
For $0\le j<k\le n+1$, let $\alpha(j,k)$ denote the number of
$G$-conjugacy classes of non-trivial arrows in $I(\Sub(G))$ with $P$-arc
$(j,k)$.  Then
\[
  \alpha(j,k) = \begin{cases}
    1 & j=0,\;k=n+1,\\
    2 & j=0,\;1\le k\le n,\\
    2 & 1\le j\le n,\;k=n+1,\\
    3 & 1\le j<k\le n.
  \end{cases}
\]
\end{lemma}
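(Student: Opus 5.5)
The proof is a direct count: I will list, for each $P$-arc $(j,k)$, which conjugacy classes of non-trivial arrows realise it. By the Remark after Definition~\ref{def:arrowtypes}, every non-trivial arrow up to conjugacy is a $C$-arrow $(c_i,c_{i'})$ with $i<i'$, a $D$-arrow $(d_i,d_{i'})$ with $i<i'$, or an $X$-arrow $(c_i,d_{i'})$ with $i\le i'$. I will treat these as classes of pairs $(H,K)$ with $H\le K$ modulo simultaneous conjugation. Definition~\ref{def:rank} then gives their $P$-arcs as follows:
\[
P(c_i,c_{i'})=(i,i'),\qquad P(d_i,d_{i'})=(i+1,i'+1),\qquad P(c_i,d_{i'})=(i,i'+1).
\]
So for a fixed arc $(j,k)$ there is at most one candidate of each type:
\begin{itemize}
\item a $C$-arrow $(c_j,c_k)$, which requires $k\le n$;
\item a $D$-arrow $(d_{j-1},d_{k-1})$, which requires $j\ge 1$;
\item an $X$-arrow $(c_j,d_{k-1})$, which requires $j\le k-1$; this always holds since $j<k$.
\end{itemize}

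Counting the surviving candidates in each regime gives the four cases. If $j=0$ and $k=n+1$, only the $X$-arrow $(c_0,d_n)$ survives, so $\alpha=1$. If $j=0$ and $1\le k\le n$, the $C$- and $X$-arrows survive, so $\alpha=2$. If $j\ge 1$ and $k=n+1$, the $D$- and $X$-arrows survive, so $\alpha=2$. If $1\le j<k\le n$, all three survive, so $\alpha=3$.

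The step needing care is that each candidate is exactly one conjugacy class of pairs, not several. For $C$-arrows this is immediate, since each $C_{p^i}$ is normal. For an $X$-arrow $(C_{p^i},D)$, the first entry is normal, so the class is determined by the conjugacy class of $D$, which is a single class by the setup. For a $D$-arrow, I will fix the larger subgroup as $D_{p^{i'}}=\langle r^{p^{n-i'}},s\rangle$ up to conjugacy. Its subgroups of type $d_i$ are $\langle r^{p^{n-i}},r^{t}s\rangle$ with $t$ a multiple of $p^{n-i'}$. I will then check that the normaliser of $D_{p^{i'}}$ acts transitively on these by conjugating with suitable powers of $r$. This works because conjugation by $r^u$ sends $r^ts$ to $r^{t+2u}s$, and $2$ is invertible modulo the odd number $p^{n}$, so any required shift by a multiple of $p^{n-i'}$ can be achieved with $u$ itself a multiple of $p^{n-i'}$, keeping $D_{p^{i'}}$ fixed.

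Distinct types never give conjugate arrows, because conjugation preserves the isomorphism types of source and target. This completes the count. I expect the transitivity check for $D$-arrows to be the only point with real content; the rest is bookkeeping.
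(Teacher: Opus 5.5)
Your proof is correct and follows the paper's argument: both list, for each $P$-arc, the admissible (source type, target type) pairs among $C\hookrightarrow C$, $C\hookrightarrow D$, $D\hookrightarrow D$, rule out $D\hookrightarrow C$, and check that each admissible pair is a single conjugacy class of arrows. Your $D$-arrow check fixes the target and conjugates by rotations $r^u$ with $u$ a multiple of $p^{n-i'}$, using that $2$ is invertible mod $p^n$; this is a more explicit version of the paper's remark that each conjugate of $D_{p^{j-1}}$ lies in a corresponding conjugate of $D_{p^{k-1}}$. Your constraints are also stated more cleanly, with $k\le n$ required only for $C$-arrows, whereas the paper's proof wrongly imposes $k\le n$ on its cases (b) and (c) before using them at $k=n+1$.
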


\begin{proof}
A subgroup of $P$-value $j$ is $C_{p^j}$ (when $j\le n$) or $D_{p^{j-1}}$
(when $j\ge 1$).
Since $G$ acts transitively on conjugates of each $D_{p^l}$, the number of
conjugacy classes of arrows with a given $P$-arc equals the number of valid
(source type, target type) pairs, each contributing exactly one class.
We enumerate:
\begin{itemize}
\item[(a)] $C_{p^j}\hookrightarrow C_{p^k}$ (requires $j<k\le n$): both
  normal, conjugacy-fixed: $1$ class.
\item[(b)] $C_{p^j}\hookrightarrow D_{p^{k-1}}$ (requires $j\le k-1$,
  $k\le n$): since $C_{p^j}\unlhd G$, conjugation acts transitively on the
  target: $1$ class.
\item[(c)] $D_{p^{j-1}}\hookrightarrow D_{p^{k-1}}$ (requires $1\le j<k$,
  $k\le n$): $G$ acts transitively on conjugates of $D_{p^{j-1}}$, each
  inside the corresponding conjugate of $D_{p^{k-1}}$: $1$ class.
\item[(d)] $D_{p^{j-1}}\hookrightarrow C_{p^k}$: impossible by
  Proposition~\ref{prop:rankstrict}: $0$ classes.
\end{itemize}
Combining subject to boundary conditions:
$j=0,\,k=n+1$: unique inclusion $1\hookrightarrow G$, $\alpha=1$;
$j=0,\,1\le k\le n$: types (a) and (b) give $\alpha=2$;
$1\le j\le n,\,k=n+1$: types (b) and (c) give $\alpha=2$
  (type (a) would require $C_{p^{n+1}}$, which does not exist);
$1\le j<k\le n$: types (a), (b), (c) give $\alpha=3$.
\end{proof}

\begin{proposition}\label{prop:rainbow}
For each $n\ge 1$ there exists a partial rainbow on $\Sub(D_{p^n})$ of size
$\lfloor 3n/2\rfloor+1$.
\end{proposition}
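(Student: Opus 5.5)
We will build the partial rainbow explicitly by choosing a strictly nested chain of $P$-arcs in $\{0,1,\dots,n+1\}$ and, on each arc $(j,k)$ of the chain, taking one representative arrow from \emph{every} conjugacy class with that $P$-arc. Lemma~\ref{lem:alpha} tells us how many classes each arc carries. This follows the convention of \cite[Definitions 3.1--3.2]{ABB+25}: several arrows may share a $P$-arc provided they lie in distinct conjugacy classes, and distinct arcs must be strictly nested, meaning both endpoints move strictly inward. The count is then a sum of values of $\alpha$.

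Lemma~\ref{lem:alpha} shows the outermost arc $(0,n+1)$ is worth only $1$, boundary arcs $(0,k)$ with $k\le n$ are worth $2$, and interior arcs $(j,k)$ with $1\le j<k\le n$ are worth $3$. So we want as many interior arcs as possible, each shrinking by one at both ends, and we choose the start according to the parity of $n$.

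\emph{Case $n=2k$.} We take the arcs $(i,\,n+1-i)$ for $0\le i\le k$. These are strictly nested, and $i<n+1-i$ holds since $2k<2k+1$. The arc $i=0$ contributes $\alpha(0,n+1)=1$. For $1\le i\le k$ we have $1\le i<n+1-i\le n$, so each contributes $3$. The total is $1+3k=\lfloor 3n/2\rfloor+1$.

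\emph{Case $n=2k+1$.} Starting at $(0,n+1)$ would give only $1+3k$, so instead we start at $(0,n)$, worth $2$. We take the arcs $(i,\,n-i)$ for $0\le i\le k$. These are strictly nested with $i<n-i$ throughout, since $k<k+1$. The arc $i=0$ contributes $\alpha(0,n)=2$, and each arc with $i\ge 1$ is interior, contributing $3$. The total is $2+3k=\lfloor 3n/2\rfloor+1$.

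It remains to confirm that the resulting set $S$ is a partial rainbow. Two things need checking.
\begin{enumerate}[(i)]
\item \emph{At most one arrow per conjugacy class.} Arrows on different arcs have different $P$-arcs, so they cannot be conjugate, because $P$ is conjugation invariant. On a single arc we chose one arrow per class by construction.
\item \emph{The arcs form a rainbow.} The underlying arcs are pairwise strictly nested by construction.
\end{enumerate}
For the explicit representatives we use the enumeration in the proof of Lemma~\ref{lem:alpha}: the arrows $C_{p^j}\to C_{p^{k}}$, $C_{p^j}\to D_{p^{k-1}}$ and $\langle s\rangle\le D_{p^{j-1}}\to D_{p^{k-1}}$, taking those that exist for the given arc.

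The main point to check carefully is that the rainbow definition of \cite{ABB+25} really allows several arrows with the same $P$-arc, as long as they are in distinct conjugacy classes. This is the only way the values $\alpha=2,3$ can contribute, and I expect this to be the main obstacle. If instead it required literally distinct arcs, the count would collapse and could not match Corollary~\ref{cor:upper}.

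As a consistency check, the total $\lfloor 3n/2\rfloor+1$ meets the upper bound of Corollary~\ref{cor:upper} exactly, so no over-count has occurred. This matters because a tempting non-strict chain such as $(0,n+1)\supset(0,n)\supset(1,n)\supset\cdots$ would exceed that bound, which shows strictness at both ends is needed.
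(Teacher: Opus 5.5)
Your proposal is correct and is essentially the paper's proof: you use the same strictly nested arcs $(i,2m+1-i)$, $0\le i\le m$, with outer arc $(0,n+1)$ worth $1$ when $n=2m$ and $(0,n)$ worth $2$ when $n=2m+1$, and you take all $\alpha(j,k)$ classes on each arc to get $1+3m$ and $2+3m$ respectively. The definitional worry you flag is settled by the definition recalled in Section~2, which only requires the \emph{set} of $P$-arcs to form a rainbow, with at most one arrow per conjugacy class, so several arrows on the same arc are allowed.
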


\begin{proof}
Write $n=2m$ or $n=2m+1$, and consider the collection of $P$-arcs
\[
  \mathcal{R} = \{(0,2m+1),\,(1,2m),\,\dots,\,(m,m+1)\}.
\]
The $m+1$ arcs are strictly nested on $\{0,\dots,n+1\}$, so $\mathcal{R}$
is a rainbow.
Let $S$ be the set obtained by choosing, for each arc $(j,k)\in\mathcal{R}$,
exactly one arrow from each of the $\alpha(j,k)$ conjugacy classes given by
Lemma~\ref{lem:alpha}.
The conditions of \cite[Definition~3.2]{ABB+25} are satisfied by
construction; hence $S$ is a partial rainbow, and by
\cite[Proposition~3.3]{ABB+25} it is a minimal generating set for
$\langle S\rangle$, so $|S|=m(\langle S\rangle)\le c(D_{p^n})$.

\emph{Even case, $n=2m$.}
The arc $(0,n+1)$ has $\alpha=1$; each inner arc $(i,2m+1-i)$, $1\le i\le m$,
has $\alpha=3$.
Hence $|S|=1+3m=\lfloor 3n/2\rfloor+1$.

\emph{Odd case, $n=2m+1$.}
The arc $(0,n)=(0,2m+1)$ has $j=0$, $k=n$, giving $\alpha=2$.
Each inner arc $(i,2m+1-i)$, $1\le i\le m$, satisfies $1\le i<2m+1-i\le n-1$,
giving $\alpha=3$.
Hence $|S|=2+3m=\lfloor 3n/2\rfloor+1$.
\end{proof}

\subsection{Proof of Theorem D}

\begin{theorem}\label{tcomplex}
Let $p$ be an odd prime and $n\ge 1$.  Then
\[
  c(D_{p^n}) = \left\lfloor\frac{3n}{2}\right\rfloor+1
  = \begin{cases} 3k+1 & n=2k,\\ 3k+2 & n=2k+1.\end{cases}
\]
\end{theorem}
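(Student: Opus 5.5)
The theorem follows by combining the upper bound of Corollary~\ref{cor:upper} with the lower bound of Proposition~\ref{prop:rainbow}. By definition, $c(D_{p^n})$ is the maximum of $m(T)$ over all $T\in\cT(D_{p^n})$. By \cite[Corollary~2.14]{ABB+25}, $m(T)$ is the size of any minimal generating set of $T$. So for each $T$ I fix a minimal generating set $S$ and apply Corollary~\ref{cor:upper}, which gives $m(T)=|S|\le\lfloor 3n/2\rfloor+1$. Taking the maximum over $T$ yields $c(D_{p^n})\le\lfloor 3n/2\rfloor+1$.

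For the reverse inequality, I invoke Proposition~\ref{prop:rainbow}, which produces a partial rainbow $S$ of size $\lfloor 3n/2\rfloor+1$. By \cite[Proposition~3.3]{ABB+25}, $S$ is a minimal generating set for $T=\langle S\rangle$. Hence $m(T)=\lfloor 3n/2\rfloor+1$, and so $c(D_{p^n})\ge\lfloor 3n/2\rfloor+1$.

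Equality follows. The case formula is then immediate:
\begin{itemize}
\item for $n=2k$, $\lfloor 3n/2\rfloor+1=3k+1$;
\item for $n=2k+1$, $\lfloor(6k+3)/2\rfloor+1=3k+2$.
\end{itemize}
As $n$ runs over $1,2,3,\dots$, the values are $2,4,5,7,8,\dots$, which are exactly the positive integers congruent to $1$ or $2$ modulo $3$, starting from $2$.

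The substantive work sits in the two cited results, and the step I expect to need the most care is the bridge-exclusion bound, Proposition~\ref{prop:bridge}. Summing its three inequalities gives $2|S|\le 3n+2$, which is what Corollary~\ref{cor:upper} uses. I would re-check the inductive step in parts~(2) and~(3), especially the case of two left-anchored arrows, and confirm that the rank-shifting embedding of Lemma~\ref{lem:split} lets the inductive hypothesis apply to each of the blocks $U'_A$ and $U'_B$ separately.

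On the lower-bound side, I would verify that in the odd case the outermost arc $(0,2m+1)=(0,n)$ really has $\alpha=2$, so that the count $2+3m$ is correct.
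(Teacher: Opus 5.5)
Your proposal is correct and is exactly the paper's argument: the upper bound is Corollary~\ref{cor:upper}, obtained by summing the three inequalities of Proposition~\ref{prop:bridge}, and the lower bound is the partial rainbow of Proposition~\ref{prop:rainbow}, which is a minimal generating set for the transfer system it generates by \cite[Proposition~3.3]{ABB+25}. Your check that the outer arc $(0,n)$ has $\alpha=2$ in the odd case is right, and so is your remark that for $n\ge 1$ the values begin at $2$, so the value $1$ is never attained.
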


\begin{proof}
Corollary~\ref{cor:upper} gives $c(D_{p^n})\le\lfloor 3n/2\rfloor+1$.
Proposition~\ref{prop:rainbow} produces $T=\langle S\rangle\in\cT(D_{p^n})$
with $m(T)=\lfloor 3n/2\rfloor+1$, so $c(D_{p^n})\ge\lfloor 3n/2\rfloor+1$.
The parity formula is elementary arithmetic.
\end{proof}

\begin{remark}\label{rem:compare}
Theorem~\ref{tcomplex} is the direct non-abelian analogue of
\cite[Corollary~6.6]{ABB+25}.
The two results share identical arithmetic, but diverge structurally in the
proof of the upper bound.
For $C_{p^n q}$, top and bottom arrows are independent under restriction at
the left boundary, so each pairwise bound in \cite[Lemma~6.3]{ABB+25}
follows in a single induction.
For $D_{p^n}$, the cross-restriction of Lemma~\ref{lem:anchor}(ii) (a
consequence of $\gcd(2,p)=1$) entangles $C$- and $D$-arrows at the left
boundary, necessitating the bridge-exclusion argument in Sub-case~2b, which
has no counterpart in \cite{ABB+25}.

The lower bounds are more parallel.
In both cases the interior arcs (those with $1\le j<k\le n$) contribute
$\alpha=3$, and the boundary arcs contribute $1$ or $2$.
This coincidence of $\alpha$-values reflects the fact that $D_{p^n}$ and
$C_{p^n q}$ have the same count of conjugacy classes of proper inclusions
between non-extremal subgroups, forcing the two complexities to be equal.
\end{remark}

\section{Complexity of semidihedral groups \texorpdfstring{$\SD_{2^n}$}{SD2n}: a lower bound}

We now turn to a lower bound for the complexity of $\SD_{2^n}$.
Throughout this section fix $n\ge 4$ and $G=\SD_{2^n}$ with the notation of
Section~3.1.
Since $G$ is a $2$-group, the rank function of \cite[Definition~2.10]{ABB+25}
takes the form $P(H)=\log_2|H|$ for every $H\le G$, ranging over
$\{0,1,\dots,n\}$.

\begin{remark}\label{rem:sdnotation}
In this section, we write $\mathcal{C}_{2^r}$, $\mathcal{D}_{2^r}$,
$\mathcal{Q}_{2^r}$ for the conjugacy class of subgroups of $\SD_{2^n}$ that
are cyclic, dihedral-type (inside $M_d$), and quaternion-type (inside $M_q$)
of order $2^r$, respectively.
We abbreviate these as $C$-type, $D$-type, and $Q$-type subgroups at rank $r$.
This notation is distinct from the $D_{p^k}$ of Section~4, which denotes a
specific dihedral subgroup of $D_{p^n}$ of order $2p^k$.
\end{remark}

\subsection{Forbidden inclusions and arrow types}

The three-strand structure of $\Sub(G)$ noted in Section~3.1 imposes
constraints on which inclusions between subgroups are possible.

\begin{lemma}\label{lem:sdforb}
Let $H$ be a $D$-type subgroup of order $2^j$ and $K$ be a $Q$-type
subgroup of order $2^j$ in $\SD_{2^n}$, with $j\ge 1$ and $j\ge 2$
respectively.  Then:
\begin{enumerate}[(i)]
\item No $D$-type subgroup of order $2^j$ ($j\ge 1$) embeds in any cyclic
  subgroup;
\item No $Q$-type subgroup of order $2^j$ ($j\ge 2$) embeds in any cyclic
  subgroup;
\item No $Q$-type subgroup of order $2^j$ ($j\ge 2$) embeds in any $D$-type
  subgroup;
\item No $D$-type subgroup of order $2^j$ ($j\ge 1$) embeds in any $Q$-type
  subgroup.
\end{enumerate}
Consequently the only valid inclusion types between proper subgroups are
$C\hookrightarrow C$, $C\hookrightarrow D$, $C\hookrightarrow Q$,
$D\hookrightarrow D$, and $Q\hookrightarrow Q$.
\end{lemma}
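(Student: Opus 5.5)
The plan is to argue at the level of elements, using the coset decomposition $G=\langle a\rangle\sqcup\langle a\rangle b$ together with the three maximal subgroups $M_c=\langle a\rangle$, $M_d=\langle a^2,b\rangle$, $M_q=\langle a^2,ab\rangle$ from the proof of Theorem~\ref{tsemid}.

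\emph{Convention.} I read ``cyclic subgroup'' ($C$-type) in the sense of Remark~\ref{rem:sdnotation}, namely a subgroup of $M_c=\langle a\rangle$. This matters because the $D$-type subgroups $\langle x_s\rangle$ of order $2$ and the $Q$-type subgroups $\langle y_s\rangle$ of order $4$ are themselves abstractly cyclic. I also read ``embeds'' as a proper inclusion between subgroups of different types.

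\emph{Elements outside $\langle a\rangle$.} First I record the outside elements. Write $g=a^tb$. Using $bab=a^{k}$ with $k=2^{n-2}-1$, one gets $(a^tb)^2=a^{t(1+k)}=a^{t2^{n-2}}$. So $a^tb$ is an involution when $t$ is even, and has order $4$ with square $z$ when $t$ is odd. Consequently:
\begin{itemize}
\item $M_d\setminus\langle a^2\rangle=\{x_s=a^{2s}b\}$;
\item $M_q\setminus\langle a^2\rangle=\{y_s=a^{2s+1}b\}$.
\end{itemize}
Every $D$-type subgroup of order $2^j$ ($j\ge1$) contains some $x_s$, and every $Q$-type subgroup of order $2^j$ ($j\ge2$) contains some $y_s$. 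Both facts are immediate from the explicit generators $\langle a^{2^{n-j}},x_s\rangle$ and $\langle a^{2^{n-j}},y_s\rangle$ listed in Theorem~\ref{tsemid}.

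\emph{Parts (i) and (ii).} These follow at once, since $x_s,y_s\notin\langle a\rangle$, so no subgroup of $\langle a\rangle$ contains a $D$- or $Q$-type subgroup. If one insists on allowing abstractly cyclic overgroups $\langle g\rangle$, there is one extra check. If $g\notin\langle a\rangle$, the order computation shows $\langle g\rangle$ has order at most $4$ and is itself $\langle x_s\rangle$ or $\langle y_s\rangle$. An involution $x_s$ then lies in $\langle g\rangle$ only if $g=x_s$, because the unique involution of $\langle y_s\rangle$ is $z$. Hence no inclusion of a different type occurs.

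\emph{Parts (iii) and (iv).} Every $D$-type subgroup lies in $M_d$ and every $Q$-type subgroup lies in $M_q$. Also $y_s=a^{2s+1}b\notin M_d$ and $x_s=a^{2s}b\notin M_q$, since $M_d\cap\langle a\rangle b$ and $M_q\cap\langle a\rangle b$ consist of the even and odd powers respectively. So a $Q$-type subgroup cannot sit inside a $D$-type one, and vice versa.

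\emph{The ``consequently'' clause.} Every proper subgroup lies in some maximal subgroup, and each subgroup is of exactly one of the types $C$, $D$, $Q$. The remaining type pairs are then exactly the five listed. The inclusions $C\hookrightarrow D$ and $C\hookrightarrow Q$ genuinely occur, since $\langle a^{2^{n-j}}\rangle$ lies in both $D$- and $Q$-type subgroups of the same or larger order, so none of the five is vacuous.

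I expect the main obstacle to be the bookkeeping around small orders, where $D$-type of order $2$ and $Q$-type of order $4$ are abstractly cyclic. Pinning down the convention and the parity computation $(a^tb)^2=a^{t2^{n-2}}$ handles this. Beyond that the argument is routine coset membership.
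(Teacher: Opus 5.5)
Your proof is correct, but it gets there by a somewhat different mechanism than the paper.

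The paper argues part by part using structural invariants:
\begin{enumerate}[(i)]
\item For (i) and (ii) it notes that cyclic subgroups are abelian, so non-abelian $D$- and $Q$-type subgroups cannot lie in them. It then treats the reflection $\langle x_s\rangle$ and the group $\langle y_s\rangle\cong C_4$ separately, by observing that $x_s,y_s\notin\langle a\rangle$.
\item For (iii) it uses that the elements of a $Q$-type subgroup off the cyclic part have order $4$, whereas those of a $D$-type subgroup are reflections of order $2$.
\item For (iv) it uses that a $D$-type subgroup contains a non-central involution, while every $Q$-type subgroup has $z$ as its only involution.
\end{enumerate}

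You instead reduce all four parts to one fact. This is the parity computation $(a^tb)^2=a^{t2^{n-2}}$, together with $M_d\cap\langle a\rangle b=\{a^{2s}b\}$ and $M_q\cap\langle a\rangle b=\{a^{2s+1}b\}$. It gives every $D$-type or $Q$-type subgroup a witness $x_s$ or $y_s$ that lies outside $\langle a\rangle$ and outside the other strand's maximal subgroup.

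Your route is uniform in $j$, and it is tighter than the paper's proof as written, which has two slips:
\begin{enumerate}[(i)]
\item The paper claims that non-abelianness settles all of (i) for $j\ge 2$. But the $D$-type subgroup of order $4$ is the Klein four-group $\{1,z,x_s,x_sz\}$, which is abelian.
\item Its opening assertion that all cyclic subgroups lie in $M_c$ is false for $\langle x_s\rangle$ and $\langle y_s\rangle$.
\end{enumerate}
Your explicit convention on \emph{cyclic} and your membership argument avoid both problems. The paper's (iii) likewise needs the qualifier ``outside $\langle a\rangle$'', which your coset formulation supplies automatically. What the paper's invariants buy is a more structural explanation of why the two strands differ, at the cost of more case splitting.

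You also address the \emph{consequently} clause, which the paper leaves implicit. It is worth adding the one-line reason that the types partition the proper subgroups: a proper subgroup not contained in $\langle a\rangle$ meets $\langle a\rangle$ inside $\langle a^2\rangle$. Hence it lies in $M_d$ or $M_q$, but not in both, since $M_d\cap M_q=\langle a^2\rangle$.
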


\begin{proof}
All cyclic subgroups of $G$ lie in $M_c=\langle a\rangle$, which is abelian.
Hence no non-abelian subgroup embeds in any cyclic subgroup; this gives~(ii)
for $j\ge 3$ (since $Q$-type subgroups of order $\ge 8$ are non-abelian) and
all of~(i) for $j\ge 2$.

For~(i) with $j=1$: the $D$-type subgroup of order $2$ is $\langle x_s\rangle
=\langle a^{2s}b\rangle$ for some $s$, a reflection.  No reflection is a
power of $a$, since $b\notin M_c=\langle a\rangle$.  Hence no reflection
embeds in any cyclic subgroup. \checkmark

For~(ii) with $j=2$: the $Q$-type subgroup of order $4$ is $\langle y_s
\rangle=\langle a^{2s+1}b\rangle$, where $y_s^2=a^{(2s+1)2^{n-2}}\ne e$
for $n\ge 4$ (since $(2s+1)2^{n-2}\not\equiv 0\pmod{2^{n-1}}$ as $2s+1$
is odd).  So $y_s$ has order $4$, and $\langle y_s\rangle\cong C_4$ is
non-abelian-free but not contained in the cyclic $\langle a\rangle$ since
$y_s\notin M_c$. \checkmark

For~(iii): every element of a $Q$-type subgroup outside its cyclic part has
order $4$ (as computed above), while every element of a $D$-type subgroup
outside its cyclic part is a reflection of order $2$.  An element of order
$4$ cannot belong to a $D$-type subgroup. \checkmark

For~(iv): every $D$-type subgroup contains a reflection (an element of order
$2$ that is non-central, i.e., not equal to $z=a^{2^{n-2}}$), while in
every $Q$-type subgroup the unique element of order $2$ is $z$ (central in
$G$).  A non-central involution cannot embed in any $Q$-type subgroup.
\checkmark
\end{proof}

\subsection{The \texorpdfstring{$\alpha$}{alpha}-function for \texorpdfstring{$\SD_{2^n}$}{SD2n}}

For $0\le j<k\le n$, let $\alpha(j,k)$ denote the number of $G$-conjugacy
classes of non-trivial arrows in $\cI(\Sub(G))$ with $P$-arc $(j,k)$.
At each rank level $r$, the conjugacy classes of subgroups are:
rank $0$, one class ($C_1$); rank $1$, two classes (one $C$-type $\langle z\rangle$
and one $D$-type $\langle b\rangle$, both of order $2$);
ranks $2\le r\le n-1$, three classes ($C$-type, $D$-type, $Q$-type, all
of order $2^r$);
rank $n$, one class ($G$).
Since $G$ acts transitively on conjugates of each $D$-type and $Q$-type
subgroup, each valid (source type, target type) pair contributes exactly one
conjugacy class of arrows.

\begin{lemma}\label{lem:alphaSD}
\[
  \alpha(j,k) =
  \begin{cases}
    1 & j=0,\;k=n,\\
    2 & j=0,\;k=1,\\
    3 & j=0,\;2\le k\le n-1,\\
    2 & j=1,\;k=n,\\
    3 & 2\le j\le n-1,\;k=n,\\
    4 & j=1,\;2\le k\le n-1,\\
    5 & 2\le j<k\le n-1.
  \end{cases}
\]
\end{lemma}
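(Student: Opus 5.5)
I will mirror the proof of Lemma~\ref{lem:alpha}. The count is a count of valid (source type, target type) pairs, each contributing exactly one conjugacy class of arrows. First I tabulate the subgroup classes at each rank: one class at rank $0$ (the trivial group), two at rank $1$ ($C$-type $\langle z\rangle$ and $D$-type $\langle b\rangle$), three at ranks $2\le r\le n-1$, and $G$ alone at rank $n$. I also record the convention that at rank $n-1$ the three classes are the maximals $M_c,M_d,M_q$. These are normal, so the $C$/$D$/$Q$ labelling there is consistent with the strands of Section~3.1.

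Second, I justify the ``one class per pair'' principle. Fix a source class $[H]$ and a target class $[K]$ with $H\le K$ for some representatives. I must show all such pairs $(H',K')$ with $H'\in[H]$, $K'\in[K]$, $H'\le K'$ are simultaneously conjugate, i.e.\ that $N_G(K)$ acts transitively on the conjugates of $H$ lying in $K$.

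\begin{itemize}
\item If the source is $C$-type, it is normal in $G$ (all cyclic subgroups lie in $\langle a\rangle$ and are characteristic there). Transitivity on the target then suffices.
\item For $D\hookrightarrow D$ and $Q\hookrightarrow Q$, I use the chain description from Section~3.1. The subgroups $\langle a^{2^{n-k}},x_s\rangle$ containing a given smaller $D$-type subgroup are unique at each order, since each has a unique cover along the chain. The same holds for $Q$-type. Hence the pair is determined by the source, and $G$ is transitive on sources.
\end{itemize}

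This is the step I expect to be the main obstacle. For example, one must check that $\langle x_s\rangle$ lies in exactly one $D$-type subgroup of each order. This follows from $x_s$ determining $s$ modulo $2^{n-k}$.

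Third, I use Lemma~\ref{lem:sdforb} to restrict the admissible pairs to $C\hookrightarrow C$, $C\hookrightarrow D$, $C\hookrightarrow Q$, $D\hookrightarrow D$, $Q\hookrightarrow Q$, plus any pair with target $G$. I also need existence: every $C$-type subgroup of order $2^j$ with $j\le n-2$ lies in $\langle a^2\rangle$, hence in every $D$- and $Q$-type subgroup of larger order. The chains give the $D\hookrightarrow D$ and $Q\hookrightarrow Q$ inclusions. One edge case must be checked: the $C$-type subgroup of rank $n-2$ must sit inside the $D$- and $Q$-type subgroups of rank $n-1$, namely $M_d$ and $M_q$, which both contain $\langle a^2\rangle$.

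Finally, I count case by case.
\begin{itemize}
\item $j=0$: the source is the trivial group, so the count is the number of classes at rank $k$. This gives $1$ for $k=n$, $2$ for $k=1$, and $3$ for $2\le k\le n-1$.
\item $k=n$: the count is the number of classes at rank $j$, giving $2$ for $j=1$ and $3$ for $2\le j\le n-1$.
\item $j=1$, $2\le k\le n-1$: the source $C$ gives $3$ targets, and the source $D$ gives only $D\hookrightarrow D$, for a total of $4$.
\item $2\le j<k\le n-1$: the pairs are $C\to C,D,Q$, then $D\to D$, then $Q\to Q$, for a total of $5$.
\end{itemize}
This yields the stated table.
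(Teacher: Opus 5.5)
Your route is the paper's: count admissible (source type, target type) pairs, use Lemma~\ref{lem:sdforb} to discard $D\hookrightarrow C$, $Q\hookrightarrow C$, $Q\hookrightarrow D$ and $D\hookrightarrow Q$, and let each surviving pair contribute one conjugacy class of arrows. Your second step is more careful than the paper. The paper derives ``one class per pair'' only from transitivity of $G$ on each class of subgroups, which is not enough in general. What actually makes the principle hold is your argument: $C$-type sources are normal, and a $D$- (resp.\ $Q$-) type subgroup lies in exactly one $D$- (resp.\ $Q$-) type subgroup of each larger order. You also check that each admissible pair is realized, which the paper leaves implicit for the interior arcs.

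Two of your justifications need repair, though the conclusions are true.

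\emph{Normality of $C$-type sources.} The parenthetical ``all cyclic subgroups lie in $\langle a\rangle$'' is false. The subgroup $\langle ab\rangle\cong C_4$ is cyclic, lies outside $\langle a\rangle$, and is exactly your rank-$2$ $Q$-type class. All you need is that $C$-type subgroups are, by definition, subgroups of the cyclic normal subgroup $\langle a\rangle$. They are therefore characteristic in $\langle a\rangle$ and normal in $G$.

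\emph{Existence of $C\hookrightarrow D$ and $C\hookrightarrow Q$.} The inference ``lies in $\langle a^2\rangle$, hence in every $D$- and $Q$-type subgroup of larger order'' is a non sequitur. A $D$- or $Q$-type subgroup of order $2^k$ with $k\le n-2$ does not contain $\langle a^2\rangle$. The correct argument is that such a subgroup contains $\langle a^{2^{n-k}}\rangle$, the unique subgroup of $\langle a\rangle$ of order $2^{k-1}$. Since $\langle a\rangle$ is cyclic, this contains the $C$-type subgroup of order $2^j$ for every $j\le k-1$. That is exactly what the pairs $C\hookrightarrow D$ and $C\hookrightarrow Q$ with $j<k$ require, including $\langle z\rangle$ in the $j=1$ row.

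With these two fixes your case count goes through and yields the stated table.
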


\begin{proof}
We enumerate valid (source type, target type) pairs in each region.

For $j=0$, $k=n$: the unique inclusion $C_1\hookrightarrow G$ gives
$\alpha=1$.
For $j=0$, $k=1$: target types are one $C$-type and one $D$-type of rank
$1$, both reachable from $C_1$, giving $\alpha=2$.
For $j=0$, $2\le k\le n-1$: target types are $C$-type, $D$-type, and
$Q$-type of rank $k$, all reachable from $C_1$, giving $\alpha=3$.

For $j=1$, $k=n$: source types are the $C$-type and the $D$-type of rank
$1$, both of which embed in $G$, giving $\alpha=2$.
For $2\le j\le n-1$, $k=n$: source types are $C$-type, $D$-type, and
$Q$-type of rank $j$, all of which embed in $G$, giving $\alpha=3$.

For $j=1$, $2\le k\le n-1$: source types are $C$-type and $D$-type of rank
$1$; target types are $C$-type, $D$-type, and $Q$-type of rank $k$.
Lemma~\ref{lem:sdforb}(i) forbids $D\hookrightarrow C$ and
Lemma~\ref{lem:sdforb}(iv) forbids $D\hookrightarrow Q$.
The remaining valid pairs are $(C,C)$, $(C,D)$, $(C,Q)$, $(D,D)$,
giving $\alpha=4$.

For $2\le j<k\le n-1$: source types are $C$-type, $D$-type, and $Q$-type
of rank $j$; target types are $C$-type, $D$-type, and $Q$-type of rank $k$.
Lemma~\ref{lem:sdforb} forbids $D\hookrightarrow C$ (i), $Q\hookrightarrow C$
(ii), $Q\hookrightarrow D$ (iii), and $D\hookrightarrow Q$ (iv).
The remaining valid pairs are $(C,C)$, $(C,D)$, $(C,Q)$, $(D,D)$,
$(Q,Q)$, giving $\alpha=5$.
\end{proof}

\begin{remark}
The value $\alpha=5$ in the interior region $2\le j<k\le n-1$ reflects a
structural feature absent in the dihedral case $D_{p^n}$: the three-strand
structure of $\Sub(\SD_{2^n})$ with mutual non-embeddings between the $D$-
and $Q$-strands forces five valid inclusion types in place of three.
\end{remark}

\subsection{Lower bound via a partial rainbow}

\begin{proposition}\label{prop:rainbowSD}
For each $n\ge 4$ there exists a partial rainbow on $\Sub(\SD_{2^n})$ of
size $\lfloor 5(n-1)/2\rfloor$.
\end{proposition}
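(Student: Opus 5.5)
The plan is to imitate the proof of Proposition~\ref{prop:rainbow}: choose a rainbow $\mathcal{R}$ of strictly nested $P$-arcs on $\{0,1,\dots,n\}$ that lets us collect as many conjugacy classes as possible. Then let $S$ consist of exactly one arrow from each of the $\alpha(j,k)$ conjugacy classes with $P$-arc $(j,k)\in\mathcal{R}$. The counts $\alpha(j,k)$ come from Lemma~\ref{lem:alphaSD}. By construction $S$ meets the conditions of \cite[Definition~3.2]{ABB+25}: its $P$-arcs form a rainbow, and at most one arrow is taken from each conjugacy class. Then \cite[Proposition~3.3]{ABB+25} says $S$ is a minimal generating set for $\langle S\rangle$. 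So it only remains to choose $\mathcal{R}$ and check that $\sum_{(j,k)\in\mathcal{R}}\alpha(j,k)=\lfloor 5(n-1)/2\rfloor$.

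Lemma~\ref{lem:alphaSD} shows the interior arcs, with $2\le j<k\le n-1$, are the most valuable ($\alpha=5$). The boundary arcs are worth less: $\alpha(0,n)=1$, $\alpha(0,k)=3$ for $2\le k\le n-1$, and $\alpha(1,k)=4$ for $2\le k\le n-1$. The choice of $\mathcal{R}$ therefore depends on parity, as follows.

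\emph{Odd case, $n=2m+1$ (so $m\ge 2$).} Take the symmetric rainbow
\[
  \mathcal{R}=\{(0,n),\,(1,n-1),\,(2,n-2),\dots,(m,m+1)\}.
\]
The arc $(0,n)$ contributes $1$. The arc $(1,2m)$ has $2\le 2m\le n-1$, so it contributes $4$. Each arc $(i,2m+1-i)$ with $2\le i\le m$ satisfies $2\le i<2m+1-i\le n-2$, so it contributes $5$; there are $m-1$ of these. The total is $1+4+5(m-1)=5m=\lfloor 5(n-1)/2\rfloor$.

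\emph{Even case, $n=2m$ (so $m\ge 2$).} Here the symmetric rainbow falls short: it gives only $1+4+5(m-2)=5m-5$. Instead, drop the apex $G$ and shift the rainbow down by one, taking
\[
  \mathcal{R}=\{(0,n-1),\,(1,n-2),\,(2,n-3),\dots,(m-1,m)\}.
\]
The arc $(0,2m-1)$ contributes $3$, since $2\le 2m-1\le n-1$. The arc $(1,2m-2)$ contributes $4$, since $2m-2\ge 2$. Each arc $(i,2m-1-i)$ with $2\le i\le m-1$ is interior, because $i<2m-1-i$ exactly when $i\le m-1$; these give $m-2$ arcs contributing $5$ each. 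The total is $3+4+5(m-2)=5m-3=\lfloor 5(n-1)/2\rfloor$. In both cases the arcs are visibly strictly nested.

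I expect the only delicate point to be the legitimacy of the partial rainbow. We must confirm that the arrows chosen for a single $P$-arc, which come from distinct (source type, target type) pairs in Lemma~\ref{lem:alphaSD}, really lie in distinct conjugacy classes. We must also confirm that \cite[Definition~3.2]{ABB+25} allows several arrows sharing one $P$-arc, as it did in Proposition~\ref{prop:rainbow}. The first point follows because conjugation preserves the $C$/$D$/$Q$ type of a subgroup (Remark~\ref{rem:sdnotation}). Once these two points are granted, the rest is the arithmetic above, and we conclude $c(\SD_{2^n})\ge\lfloor 5(n-1)/2\rfloor$.
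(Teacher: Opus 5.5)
Your proposal is correct and uses the paper's method: a partial rainbow whose arcs are weighted by the $\alpha$-values of Lemma~\ref{lem:alphaSD}, closed off by \cite[Proposition~3.3]{ABB+25}. The odd case matches the paper's, and your indexing $(i,2m+1-i)$ of the inner arcs is the right one where the paper writes $(i,n+1-i)$. In the even case you drop the top rank $n$ and take $\{(0,n-1),\dots,(m-1,m)\}$, giving $3+4+5(m-2)$; the paper instead drops rank $0$ and takes $\{(1,n),(2,n-1),\dots,(m,m+1)\}$, giving $2+5(m-1)$. Both totals equal $5m-3$, so your choice is simply an equally valid alternative rainbow.
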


\begin{proof}
Write $n=2m+1$ (odd) or $n=2m$ (even).

\emph{Odd case, $n=2m+1$, so $m\ge 2$ since $n\ge 5$.}
Take the rainbow
\[
  R = \{(0,n),\,(1,n-1),\,(2,n-2),\,\dots,\,(m,m+1)\}
\]
of $m+1$ strictly nested arcs on $\{0,\dots,n\}$.
By Lemma~\ref{lem:alphaSD}:
the arc $(0,n)$ has $\alpha=1$;
the arc $(1,n-1)$ has $j=1$ and $k=n-1=2m\ge 2$ (since $m\ge 2$), giving
$\alpha=4$;
each arc $(i,n+1-i)$ for $2\le i\le m$ has $2\le i<n+1-i\le n-1$ (since
$i\le m$ implies $n+1-i\ge m+2>i$), giving $\alpha=5$.
There are $m-1$ such inner arcs.
Choosing one arrow from each conjugacy class per arc yields a partial rainbow
$S$ with
\[
  |S| = 1+4+5(m-1) = 5m = \left\lfloor\frac{5(n-1)}{2}\right\rfloor.
\]

\emph{Even case, $n=2m$, so $m\ge 2$ since $n\ge 4$.}
Take the rainbow
\[
  R_A = \{(1,n),\,(2,n-1),\,\dots,\,(m,m+1)\}
\]
of $m$ strictly nested arcs on $\{0,\dots,n\}$, with $0$ omitted.
By Lemma~\ref{lem:alphaSD}:
the arc $(1,n)$ has $j=1$, $k=n$, giving $\alpha=2$;
each arc $(i,2m+1-i)$ for $2\le i\le m$ has $j=i\ge 2$ and
$k=2m+1-i\le 2m-1=n-1$ (since $i\ge 2$) and $j<k$ (since $i\le m<2m+1-i$),
giving $\alpha=5$.
There are $m-1$ such inner arcs.
Hence
\[
  |S| = 2+5(m-1) = 5m-3 = \left\lfloor\frac{5(n-1)}{2}\right\rfloor.
\]

In both cases the $P$-arcs of $S$ form a rainbow by construction, and exactly
one arrow is chosen from each conjugacy class, so $S$ satisfies
\cite[Definition~3.2]{ABB+25}.
By \cite[Proposition~3.3]{ABB+25}, $S$ is a minimal generating set for
$\langle S\rangle$, giving $m(\langle S\rangle)=|S|\le c(\SD_{2^n})$.
\end{proof}

\begin{theorem}\label{tsdlower}
For $n\ge 4$,
\[
  c(\SD_{2^n}) \ge \left\lfloor\frac{5(n-1)}{2}\right\rfloor
  = \begin{cases} 5m-3 & n=2m,\\ 5m & n=2m+1.\end{cases}
\]
\end{theorem}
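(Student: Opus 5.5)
The theorem follows directly from Proposition~\ref{prop:rainbowSD} together with the definition of complexity. The plan has two steps. First, invoke Proposition~\ref{prop:rainbowSD} to obtain, for each $n\ge 4$, a partial rainbow $S\subseteq\cI(\Sub(\SD_{2^n}))$ with $|S|=\lfloor 5(n-1)/2\rfloor$. Second, apply \cite[Proposition~3.3]{ABB+25}: it says $S$ is a minimal generating set for the transfer system $T=\langle S\rangle\in\cT(\SD_{2^n})$. Since $m(T)$ is independent of the chosen minimal generating set \cite[Corollary~2.14]{ABB+25}, we get $m(T)=|S|$. Therefore $c(\SD_{2^n})=\max_{T'}m(T')\ge m(T)=\lfloor 5(n-1)/2\rfloor$.

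The displayed parity formula is then elementary arithmetic. For $n=2m$ we have $5(n-1)/2=5m-5/2$, whose floor is $5m-3$. For $n=2m+1$ we have $5(n-1)/2=5m$ exactly. Both values match the sizes computed in the even and odd cases of Proposition~\ref{prop:rainbowSD}.

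The only substantive point, and hence the main obstacle, lies upstream in Proposition~\ref{prop:rainbowSD}. There one must confirm that the $\alpha$-values of Lemma~\ref{lem:alphaSD} are correct for every arc used, in particular the interior value $\alpha=5$ and the boundary values $\alpha(1,n-1)=4$ and $\alpha(1,n)=2$. One must also confirm that choosing one arrow per conjugacy class on nested arcs meets the hypotheses of \cite[Definition~3.2]{ABB+25}. If I were re-checking this, I would first verify the forbidden inclusions of Lemma~\ref{lem:sdforb}. I would then check that each surviving (source type, target type) pair gives exactly one conjugacy class of arrows. This requires transitivity of $G$ on the relevant pairs $(H,K)$ with $H\le K$, not merely on the subgroups $K$. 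Granting these earlier results, the theorem itself needs no further argument.
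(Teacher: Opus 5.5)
Your proposal is correct and is essentially the paper's own proof: Theorem~\ref{tsdlower} follows directly from the partial rainbow of Proposition~\ref{prop:rainbowSD} via \cite[Proposition~3.3]{ABB+25}, using that $m(T)$ is well defined, and the parity formula is the same elementary arithmetic. One small remark: a lower bound only needs at least one arrow for each surviving (source type, target type) pair on each arc, and arrows from different type pairs are automatically non-conjugate, so the transitivity-on-pairs check you flag matters only for the exact value of $\alpha$, not for this theorem.
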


\begin{proof}
Immediate from Proposition~\ref{prop:rainbowSD} and
\cite[Proposition~3.3]{ABB+25}.
\end{proof}

The first few values of the lower bound are $c(\SD_{16})\ge 7$,
$c(\SD_{32})\ge 10$, $c(\SD_{64})\ge 12$, and $c(\SD_{128})\ge 15$,
corresponding to $n=4,5,6,7$ respectively.

\section{Concluding remarks}

The formulas obtained fit neatly into the pattern observed for dihedral and
quaternion groups in \cite{KLM+25}.
For \(\SD_{2^n}\) the value \(2n-2\) closely parallels \(w(D_{2^n})=2n-1\)
and \(w(Q_{2^{n+2}})=2n+2\); the slight discrepancy reflects the presence of
three rather than two maximal subgroups.
The quasidihedral structure, being a close cousin of the dihedral case,
simplifies the count via the same chain arguments used for \(M_d\) and
\(M_q\).
For \(\AGL(1,p^n)\) the expression \(\Omega(p^n-1)+\tau(n)\) exhibits an
attractive interplay between the arithmetic of \(p^n-1\) and the divisor
lattice of \(n\).

The complexity computation for $D_{p^n}$ (Theorem~\ref{tcomplex}) resolves
the question raised for this family: $c(D_{p^n})=\lfloor 3n/2\rfloor+1$,
matching $c(C_{p^n q})$ exactly despite the structurally richer subgroup
lattice.
The coincidence is explained by the equal $\alpha$-values for interior arcs
(Remark~\ref{rem:compare}).
Theorem~\ref{tsdlower} establishes that $c(\SD_{2^n})\ge\lfloor
5(n-1)/2\rfloor$, driven by the larger interior $\alpha$-value of $5$ arising
from the three-strand, no $D$-$Q$ interaction structure of $\Sub(\SD_{2^n})$.
The values $n=4,5,6,7$ have been verified against the exact complexity using
the \texttt{ninfty} software package of \cite{Bal25}, confirming equality in
each case.
We therefore conjecture that
\[
  c(\SD_{2^n}) = \left\lfloor\frac{5(n-1)}{2}\right\rfloor
\]
for all $n\ge 4$.
Establishing the matching upper bound would require pairwise bounds on the
five arrow-type counts analogous to Proposition~\ref{prop:bridge}, together
with bridge-exclusion arguments exploiting the behaviour of the central element
$z=a^{2^{n-2}}\in Z(\SD_{2^n})$ under the restriction axiom.
Computing the complexity for the affine groups $\AGL(1,p^n)$ and extending
the analysis to higher-dimensional affine groups $\AGL(d,q)$ remain
further open problems.


\begin{thebibliography}{99}

\bibitem{ABB+25} K. Adamyk, S. Balchin, M. Barrero, S. Scheirer, N. Wisdom,
  and V. Zapata Castro, \emph{On minimal bases in homotopical combinatorics},
  preprint (2025), arXiv:2506.11159.

\bibitem{Bal25} S. Balchin, \emph{ninfty: A software package for
  homotopical combinatorics}, arXiv:2504.01003, 2025.

\bibitem{BBR21} S. Balchin, D. Barnes, and C. Roitzheim,
  \emph{\(N_\infty\)-operads and associahedra},
  Pacific J. Math. \textbf{315} (2021), no.~2, 285--304.

\bibitem{BHK+25} L. Bao, C. Hazel, T. Karkos, A. Kessler, A. Nicolas,
  K. Ormsby, J. Park, C. Schleff, and S. Tilton,
  \emph{Transfer systems for rank two elementary abelian groups:
  characteristic functions and matchstick games},
  Tunis. J. Math. \textbf{7} (2025), no.~1, 167--191.

\bibitem{BH15} A. J. Blumberg and M. A. Hill,
  \emph{Operadic multiplications in equivariant spectra, norms, and
  transfers}, Adv. Math. \textbf{285} (2015), 658--708.

\bibitem{BMO25} S. Balchin, E. MacBrough, and K. Ormsby,
  \emph{The combinatorics of \(N_\infty\) operads for \(C_{q^{p^n}}\) and
  \(D_{p^n}\)}, Glasg. Math. J. \textbf{67} (2025), no.~1, 50--66.

\bibitem{BP21} P. Bonventre and L. A. Pereira,
  \emph{Genuine equivariant operads},
  Adv. Math. \textbf{381} (2021), Paper No.~107502.

\bibitem{KLM+25} S. Klanderman, C. Lewis, H. Monson, K. Shibata, and
  D. Van Niel, \emph{Characterizing transfer systems for non-abelian groups},
  preprint (2025).

\bibitem{Rub21a} J. Rubin, \emph{Combinatorial \(N_\infty\) operads},
  Algebr. Geom. Topol. \textbf{21} (2021), no.~7, 3513--3568.

\bibitem{Rub21b} J. Rubin, \emph{Detecting Steiner and linear isometries
  operads}, Glasg. Math. J. \textbf{63} (2021), no.~2, 307--342.

\end{thebibliography}
\end{document}